\newtheorem{theorem}{Theorem}[section]
\newtheorem{definition}[theorem]{Definition}
\newtheorem{lemma}[theorem]{Lemma}
\theoremstyle{definition}
\newtheorem{remark}[theorem]{Remark}
\newcommand{\rr}{\mathbb{R}}
\newcommand{\cc}{\mathbb{C}}
\newcommand{\hh}{\mathbb{H}}
\newcommand{\pp}{\partial}
\title{\bf Infinite order differential operators acting on entire hyperholomorphic functions   }
\author{
D. Alpay
\footnote{Schmid College of Science and Technology, Chapman University, Orange 92866, CA, US},
F. Colombo $^\dag$,
S. Pinton$^\dag$,
I. Sabadini
 \footnote{Politecnico di
Milano, Dipartimento di Matematica, Via E. Bonardi, 9 20133 Milano, Italy},
 D.C. Struppa
\footnote{Donald Bren Distinguished Presidential Chair in Mathematics, Chapman University, Orange 92866, CA, US}}
\begin{document}
\maketitle

\begin{abstract}
Infinite order differential operators appear in different fields of Mathematics and Physics and in the last decades
they turned out to be of fundamental importance in the study of the evolution of superoscillations as initial datum for Schr\"odinger  equation.
Inspired by the operators arising
in quantum mechanics, in this paper we investigate the continuity of a class of infinite order differential operators
acting on spaces of entire hyperholomorphic functions.
The two classes of hyperholomorphic functions, that constitute a natural extension of functions of
 one complex variable to functions of paravector variables are illustrated by
the Fueter-Sce-Qian mapping theorem.
We show that, even though the two notions of hyperholomorphic functions are quite different from each other,
 entire hyperholomorphic functions with exponential bounds
play a crucial role in the continuity of infinite order differential operators
acting on these two classes of entire hyperholomorphic functions.
We point out the remarkable fact that the exponential function of a paravector variable
is not in the kernel of the Dirac operator
but entire  monogenic functions with exponential bounds play an important role in the theory.
\end{abstract}
\vskip 1cm
\par\noindent
 AMS Classification: 32A15, 32A10, 47B38.
\par\noindent
\noindent {\em Key words}: Infinite order differential operators, Slice hyperholomorphic functions, functions in the kernel of the Dirac operator,
 entire functions with growth conditions, spaces of entire functions.
\vskip 1cm

\section{Introduction}

Infinite order differential operators
 turned out to be of fundamental importance in the study of the evolution of superoscillations as initial datum for Schr\"odinger  equation.
To study the evolution of superoscillatory functions under Schr\"odinger equation
is highly nontrivial and a natural functional setting is the space of entire functions with growth conditions,
for more details see the monograph \cite{acsst5} and \cite{Be19}.
In fact, the Cauchy problem for Schr\"odinger equation with superoscillatory initial datum leads
to  infinite order differential operators of the  type
$$
\mathcal{U}(t,x;D_x)=\sum_{m=1}^\infty u_m(t,x)D_x^m,
$$
where the coefficients $u_m(t,x)$ depend on the Green function of the Schr\"odinger equation with the potential $V$,
and $t$ and $x$ are the time and the space variables. According to the structure of the Green function
 the coefficients  $u_m(t,x)$ satisfy given growth conditions.
For some potentials $V$ we are forced to consider infinite order differential operators $\mathcal{P}(t,x;D_\xi)$ depending on
 an auxiliary complex variable $\xi$
$$
\mathcal{P}(t,x;D_\xi)=\sum_{m=1}^\infty u_m(t,x)D_\xi^m,
$$
with coefficients $u_m(t,x)$ that depend on $V$.
The continuity properties  of the operators $\mathcal{U}(t,x;D_x)$ or $\mathcal{P}(t,x;D_\xi)$ acting
 on the spaces of entire functions with exponential bounds are
  the heart of the study of the evolution of superoscillatory initial datum in quantum mechanics.
For $p\geq 1$ the natural spaces on which such the operators $\mathcal{U}(t,x;D_x)$ and $\mathcal{P}(t,x;D_\xi)$ act
are the spaces of entire functions with either order lower than $p$ or order equal to $p$ and finite type.
They  consist of entire functions $f$ for which there exist constants $B, C >0$ such that
$
|f(z)|\leq C e^{B|z|^p}.
$

\medskip
This paper is devoted to a double audience: for researchers working in complex and hypercomplex analysis and for experts working in the area of infinite order differential operators.
More precisely,  we investigate the continuity of a class of infinite order differential operators
acting on spaces of entire hyperholomorphic functions.
There are two main classes of hyperholomorphic functions
that constitute the natural extension of functions of
 one complex variable to functions of paravector variables as illustrated by
the Fueter-Sce-Qian mapping theorem, as it is recalled in the last section of this paper.
From this theorem naturally emerge
the slice hyperholomorphic functions  and the functions in the kernel of the Dirac operator that are called monogenic functions.
We show that, even though the two notions of hyperholomorphic functions are quite different from each other,
 and the exponential function is not in the kernel of the Dirac operator, hyperholomorphic functions with exponential bounds
play a crucial role in the continuity of a class of infinite order differential operators
in the hypercomplex settings. The complex version of these results where studied in the paper \cite{ACSS18}.

\medskip
In the following
we denote by $\rr_n$  the real Clifford algebra over $n$ imaginary units $e_1,\ldots ,e_n$
satisfying the relations $e_\ell e_m+e_me_\ell=0$,\  $\ell\not= m$, $e_\ell^2=-1.$
 An element $(x_0,x_1,\ldots,x_n)\in \mathbb{R}^{n+1}$  will be identified with the element
$x=x_0+\underline{x}=x_0+ \sum_{\ell=1}^nx_\ell e_\ell\in\mathbb{R}_n$ which is called paravector.
We denote by $\mathbb{S}$ the sphere
$
\mathbb{S}=\{ \underline{x}=e_1x_1+\ldots +e_nx_n\ | \  x_1^2+\ldots +x_n^2=1\}$,
we observe that
for $\mathbf{j}\in\mathbb{S}$ we obviously have $\mathbf{j}^2=-1$, with the imaginary unit $\mathbf{j}$ we obtain the complex plane
$C_\mathbf{j}$ whose elements are of the form $u+\mathbf{j}v$ for $u$, $v\in \mathbb{R}$.
 The first class of hyperholomorphic functions we consider are called slice hyperholomorphic (or slice monogenic functions) and are defined as follows.
 Let $U\subseteq \mathbb{R}^{n+1}$ be an axially symmetric open set and let
 $\mathcal{U} = \{ (u,v)\in\rr^2: u+ \mathbb{S} v\subset U\}$. A function $f:U\to \mathbb{R}_n$ is called a left slice function, if it is of the form
 \[
 f(q) = f_{0}(u,v) + \mathbf{j}f_{1}(u,v)\qquad \text{for }  \ \ \  (u,v)\in\mathcal{U}
 \]
 where $q=u+\mathbf{j}v$ and the
 two functions $f_{0},f_{1}: \mathcal{U} \to \mathbb{R}_n$  satisfy the compatibility conditions
\begin{equation}\label{CCondmon}
f_{0}(u,-v) = f_{0}(u,v),\qquad f_{1}(u,-v) = -f_{1}(u,v).
\end{equation}
If in addition $f_{0}$ and $f_{1}$ satisfy the Cauchy-Riemann-equations then $f$ is called left slice monogenic functions.
A similar notion is given in the sequel for right slice monogenic functions.

The second class of hyperholomorphic functions that we consider consists of the monogenic functions.
Left monogenic functions are define as those functions
 $f:U\subseteq \mathbb{R}^{n+1}\to \mathbb{R}_n$
that are $C^1$ and that
 are in the kernel of the Dirac operator $\mathcal D$  defined as:
$$
 \mathcal D:= \frac{\partial}{\partial_{x_0}}+\sum_{i=1}^n e_i \frac{\partial}{\partial_{x_i}},
$$
that is, $ \mathcal Df(x)=0$.
Also in this case there exists the notion of right monogenic functions.

\medskip
Let us point out the main differences between the two function theories in order to
appreciate the analogies with respect to infinite order differential operators.

\medskip
(A) The pointwise product of two hyperholomorphic functions,  in general, is not
hyperholomorphic, so we need to define the product in a way that preserves the hyperholomorphicity.
Given  two entire left slice monogenic functions $f$ and $g$,
  then their star-product (or slice hyperholomorphic product)
   is defined by
\begin{equation}
 (f\star_L g) (x) = \sum_{\ell=0}^{+\infty} x^\ell \sum_{k=0}^{\ell}a_{k}b_{\ell-k}.
\end{equation}
where $ f(x) = \sum_{k=0}^{+\infty}x^{k}a_k$ and $g(x) = \sum_{k=0}^{+\infty}x^kb_k$.

When we deal with monogenic functions the Fueter's polynomials
 $V_k(x)$ defined by
$$
V_k(x):= \frac{k!}{|k|!}\sum_{\sigma\in perm(k)} z_{j_{\sigma(1)}}z_{j_{\sigma(2)}} \ldots z_{j_{\sigma(|k|)}},
$$
where here $k$ is a multi-index,
play the same role as the monomials $x^k$, for $k\in \mathbb{N}_0$, of the paravector variable $x$ for slice monogenic functions.
The  CK-product  of two left entire monogenic $f$ and $g$ is defined by
$$
f \odot_L g:= \sum_{|k|=0}^{+\infty} \sum_{|j|=0}^{+\infty} V_{k+j}(x) f_k g_j,
$$
where
$
f(x)=\sum_{|k|=0}^\infty V_k(x)f_k,$ and $g(x)=\sum_{|k|=0}^\infty V_k(x)g_k$  are given
in terms of $V_k(x)$.

\medskip
(B) It is also possible to define slice hyperholomorphic functions, as functions in the kernel
  of the first order linear differential operator, introduced in \cite{global}, and defined by
 $$
\mathcal{G}f= \Big(|\underline{x}|^2\frac{\partial }{\partial x_0} +  \underline{x} \sum_{j=1}^n  x_j\frac{\partial }{\partial x_j}\Big)f=0,
$$
where $\underline{x}=x_1e_1+\ldots +x_ne_n$. The interesting observe
 that the operator $\mathcal{G}$ is linear nonconstant coefficients differential operator
 while the Dirac operator is linear but with constant coefficients.

\medskip
(C) The contour integral, in the Cauchy formula of slice monogenic functions and their derivatives, is computed on a
 the complex plane $C_{\mathbf{j}}$  in $\mathbb{R}^{n+1}$ (for $\mathbf{j}\in \mathbb{S}$).
 Such  contour is the boundary of $U\cap C_{\mathbf{j}}$, where the regular domain $U$ is contained
in $\mathbb{R}^{n+1}$ and is contained in a set where $f$ is slice monogenic.
For the monogenic case the integral, in the Cauchy formula
for monogenic functions and their derivatives, is computed on the boundary
of $U \subset \mathbb{R}^{n+1}$ where $\overline{U}$ is contained in the set of monogenicity of $f$.

\medskip
(D) For slice monogenic functions  there exists two different Cauchy kernels according to left and right slice hyperholomorphicity, while
left and right monogenic functions have same the Cauchy kernel.

\medskip
The main results are summarized as follows.
In the sections \ref{entireSLI} and \ref{entireMON} we collect the preliminary results on
function spaces of entire slice monogenic functions and of entire  monogenic functions with growth conditions, respectively.
 These results are of crucial importance in order to study
the continuity properties of a class of infinite order differential operators acting on entire slice monogenic and monogenic functions,
that are treated
in sections \ref{INSL} and \ref{INMON}, respectively.
We conclude this section with an overview of some of the main results.

\medskip
(I) Consider the formal infinite order differential operator
$$
U_L(x,\pp_{x_0})f(x):=\sum_{m=0}^\infty u_m(x)\star_L \pp_{x_0}^m f(x),
$$
defined on entire left slice monogenic functions $f$,
where $\star_L$ denotes the hyperholomorphic product.
Suppose that  $(u_m)_{m\in\mathbb{N}_0 }:\mathbb{R}^{n+1}\to \mathbb{R}_n$ is a sequence of entire left slice monogenic
 functions.
Assume  that $(u_m)_{m\in\mathbb{N}_0 }$ satisfy the condition such that there exists a constant $B>0$ so
 that for every $\varepsilon>0$ there exists $C_\varepsilon>0$ for which
 \begin{equation}
 |u_m(x)|\leq C_\varepsilon \frac{\varepsilon^m}{(m!)^{1/q}}\exp(B|x|^p), \ \ \ {\rm for \ all}  \ \ \ m\in \mathbb{N}_0,
 \end{equation}
where $1/p+1/q=1$ and $1/q=0$ when $p=1$.
Then in Theorem \ref{Mainslice} we show that
for $p\geq 1$ the  operator $U_L(x,\pp_{x_0})$
 acts continuously on the space of entire left slice monogenic functions with the growth condition
 $|f(x)|\leq C e^{B|x|^p}.$ In the same theorem we also considered right slice monogenic functions.

\medskip
(II)
For monogenic functions we  let $p\geq 1$ and set $\mathbb{N}_0=\mathbb{N} \cup\{0\}$.
Let $(u_m)_{m\in (\mathbb{N}_0)^n }:\mathbb{R}^{n+1}\to \mathbb{R}_n$ be  left entire monogenic functions
such that there exists a constant $B>0$ such
 that for every $\varepsilon>0$ there exists a constant $C_\varepsilon>0$ for which
\begin{equation}
 |u_m(x)|\leq C_\varepsilon \frac{\varepsilon^{|m|}}{(|m|!)^{1/q}}\exp(B|x|^p), \ \ \ {\rm for \ all}  \ \ \ m\in (\mathbb{N}_0)^n,
 \end{equation}
where $1/p+1/q=1$ and $1/q=0$ when $p=1$, and we observe that in this case $m$ is a multi-index.
We define the formal infinite order differential operator
$$
U_L(x,\pp_{x})f(x):=\sum_{|m|=0}^\infty u_m(x)\odot_L \pp_{x}^m f(x),
$$
for left entire monogenic functions $f$
where $\pp_{x}^m:= \pp_{x_1}^{m_1}\dots\pp_{x_n}^{m_n}$ and  $\odot_L$ denotes the CK-product.
Then for  $p\geq 1$, in Theorem \ref{t4}, we prove that the operator
$U_L(x,\pp_{x})$
 acts continuously on the space of left monogenic functions with the condition
 $|f(x)|\leq C e^{B|x|^p}.$

\medskip
Even though the two classes of hyperholomorphic functions have
very different Taylor series expansions
 they have strong similarities
with respect to the action of infinite order differential operators when
 we assume similar growth conditions on the coefficients of the operators.
 The results are even more surprising
 because of the exponential bounds $|f(x)|\leq C e^{B|x|^p}$ is used
for both classes of functions even though  the function $f(x)=e^{Bx}$, for $B\in \mathbb{R}$,
is slice monogenic but it is not monogenic.

\section{Function spaces of entire slice monogenic functions}\label{entireSLI}

In this section we recall some results on slice monogenic functions (see Chapter 2 in \cite{MR2752913})
and we prove some important properties of entire slice monogenic functions that appear here for the first time.
We recall that $\rr_n$ is the real Clifford algebra over $n$ imaginary units $e_1,\ldots ,e_n$.
The element
 $(x_0,x_1,\ldots,x_n)\in \mathbb{R}^{n+1}$ will be identified with the paravector
$
 x=x_0+\underline{x}=x_0+ \sum_{\ell=1}^nx_\ell e_\ell
$
and the real part $x_0$ of $x$ will also be denoted by ${\rm Re}(x)$.
An element in $\mathbb{R}_{n}$, called a {\em Clifford number}, can be written as
$$
a=a_0+a_1 e_1+\ldots +a_ne_n+a_{12}e_1e_2+\ldots+a_{123}e_1e_2e_3+\ldots+a_{12...n}e_1e_2...e_n.
$$
Denote by $A$  an element in the power set $P(1,\ldots ,n)$.
If $A=i_1\ldots i_r$, then the element $e_{i_1}\ldots e_{i_r}$ can be written as $e_{i_1...i_r}$ or, in short, $e_A$.
Thus, in a more compact form, we can write a Clifford number as
$$
a=\sum_Aa_A e_A.
$$
Possibly using the defining relations, we will order the indices in $A$ as $i_1 < \ldots <i_r$. When $A=\emptyset$ we set $e_\emptyset=1$.
The Euclidean norm of an element $y\in \mathbb{R}_n$ is
given by $|y|^2=\sum_{A} |y_A|^2$,
in particular the norm of the paravector $x\in\mathbb{R}^{n+1}$ is $|x|^2=x_0^2+x_1^2+\ldots +x_n^2$.
The conjugate of $x$ is given by
$
\bar x=x_0-\underline x=x_0- \sum_{\ell=1}^nx_\ell e_\ell.
$
Recall that $\mathbb{S}$ is the sphere
$$
\mathbb{S}=\{ \underline{x}=e_1x_1+\ldots +e_nx_n\ | \  x_1^2+\ldots +x_n^2=1\};
$$
so for $\mathbf{j}\in\mathbb{S}$ we have $\mathbf{j}^2=-1$.
Given an element $x=x_0+\underline{x}\in\rr^{n+1}$ let us define
$
\mathbf{j}_x=\underline{x}/|\underline{x}|$ if $\underline{x}\not=0,
$
 and given an element $x\in\rr^{n+1}$, the set
$$
[x]:=\{y\in\rr^{n+1}\ :\ y=x_0+{\mathbf{j}} |\underline{x}|, \ \mathbf{j}\in \mathbb{S}\}
$$
is an $(n-1)$-dimensional sphere in $\mathbb{R}^{n+1}$.
The vector space $\mathbb{R}+\mathbf{j}\mathbb{R}$ passing through $1$ and
$\mathbf{j}\in \mathbb{S}$ will be denoted by $\mathbb{C}_\mathbf{j}$ and
an element belonging to $\mathbb{C}_\mathbf{j}$ will be indicated by $u+\mathbf{j}v$, for $u$, $v\in \mathbb{R}$.
With an abuse of notation we will write $x\in\mathbb{R}^{n+1}$.
Thus, if $U\subseteq\mathbb{R}^{n+1}$ is an open set,
a function $f:\ U\subseteq \mathbb{R}^{n+1}\to\mathbb{R}_n$ can be interpreted as
a function of the paravector $x$.
We say that $U \subseteq \mathbb{R}^{n+1}$ is axially symmetric if $[x]\subset U$  for any $x \in U$.

\begin{definition}[Slice hyperholomorphic functions with values in $\mathbb{R}_n$ (or slice monogenic functions)]\label{SHolDefMON}
 Let $U\subseteq \mathbb{R}^{n+1}$ be an axially symmetric open set and let
 $\mathcal{U} = \{ (u,v)\in\rr^2: u+ \mathbb{S} v\subset U\}$. A function $f:U\to \mathbb{R}_n$ is called a left slice function, if it is of the form
 \[
 f(q) = f_{0}(u,v) + \mathbf{j}f_{1}(u,v)\qquad \text{for } q = u + \mathbf{j} v\in U
 \]
where the two functions $f_{0},f_{1}: \mathcal{U} \to \mathbb{R}_n$ satisfy the compatibility conditions
\begin{equation}\label{CCondmon}
f_{0}(u,-v) = f_{0}(u,v),\qquad f_{1}(u,-v) = -f_{1}(u,v).
\end{equation}
If in addition $f_{0}$ and $f_{1}$ satisfy the Cauchy-Riemann-equations
 \begin{align}\label{CRMMON}
\frac{\partial}{\partial u} f_{0}(u,v) - \frac{\partial}{\partial v} f_{1}(u,v) &= 0\\
\frac{\partial}{\partial v} f_{0}(u,v)+ \frac{\partial}{\partial u} f_{1}(u,v) &= 0,
\end{align}
 then $f$ is called left slice hyperholomorphic (or left slice monogenic).
A function $f:U\to \mathbb{R}_n$ is called a right slice function if it is of the form
\[
f(q) = f_{0}(u,v) + f_{1}(u,v) \mathbf{j}\qquad \text{for } q = u+ \mathbf{j}v \in U
\]
with two functions $f_{0},f_{1}: \mathcal{U} \to \mathbb{R}_n$ that satisfy \eqref{CCondmon}.
If in addition $f_{0}$ and $f_{1}$ satisfy the Cauchy-Riemann-equation, then $f$ is called right slice hyperholomorphic (or right slice monogenic).
\end{definition}

If $f$ is a left (or right) slice function such that $f_{0}$ and $f_{1}$ are real-valued, then $f$ is called intrinsic.
We denote the sets of left and right slice hyperholomorphic functions on $U$ by $\mathcal{S\!M}_L(U)$
and $\mathcal{S\!M}_R(U)$, respectively. When we do not distinguish between left of right we indicate the space  $\mathcal{S\!M}(U)$.

\begin{definition}\label{slicederivative}
Let $f: U\subseteq\mathbb{R}^{n+1}\to\mathbb{R}_n$ and let $x = u + \mathbf{j}v\in U$. If $x$ is not real, then we say that $f$ admits left slice derivative in $x$ if
\begin{equation}\label{SDerivL}
\partial_S f(x) := \lim_{p\to x, \,  p\in\cc_{\mathbf{j}}} (p-x)^{-1}(f_\mathbf{j}(p)-f_\mathbf{j}(x))
\end{equation}
exists and is finite. If $x$ is real, then we say that $f$ admits left slice derivative in $x$ if \eqref{SDerivL} exists for any $\mathbf{j}\in\mathbb{S}$.
Similarly, we say that $f$ admits right slice derivative in a nonreal point $x = u + \mathbf{j}v \in U$ if
\begin{equation}\label{SDerivR}
\partial_S f(x) := \lim_{p\to x, \,  p\in\cc_{\mathbf{j}}}(f_\mathbf{j}(p)-f_\mathbf{j}(x))(p-x)^{-1}
\end{equation}
exists and is finite, and we say that  $f$ admits right slice derivative in a real point
$x\in U$ if \eqref{SDerivR} exists and is finite, for any $\mathbf{j}\in\mathbb{S}$.
\end{definition}
\begin{remark}
Observe that $\partial_S f(x)$ is uniquely defined and independent of the choice of $\mathbf{j}\in\mathbb{S}$ even if $x$ is real.
If $f$ admits slice derivative, then $f_{\mathbf{j}}$ is $\cc_\mathbf{j}$-complex left resp. right differentiable and we find
\begin{equation}\label{SDerivPartial}
\partial_S f(x) = f_{\mathbf{j}}'(x) = \frac{\partial}{\partial u} f_{\mathbf{j}}(x) = \frac{\partial}{\partial u}f(x),\quad x = u + \mathbf{j}v.
\end{equation}
\end{remark}

\begin{theorem}\label{PowSerThm}
Let $a\in\mathbb{R}$, let $r>0$ and let $B_{r}(a) = \{x\in\hh: |x-a|<r\}$. If $f\in\mathcal{S\!M}_L(B_r(a))$, then
\begin{equation}
\label{PowSerL}
f(x) = \sum_{k= 0}^{+\infty} (x-a)^k\frac{1}{k!} \partial_S^k f(a)\qquad \forall x = u + \mathbf{j} v \in B_r(a).
\end{equation}
If on the other hand $f\in\mathcal{S\!M}_R(B_r(a))$, then
\[
f(x) = \sum_{k= 0}^{+\infty}\frac{1}{k!}\left(\partial_S^k f(a)\right)  (x-a)^k\qquad \forall x = u + \mathbf{j} v \in B_r(a).
\]
\end{theorem}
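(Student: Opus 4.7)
The plan is to reduce to the classical complex Taylor expansion on a slice and then lift the result to the whole ball via the slice structure, exploiting the fact that the centre $a$ is real.

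First, I would fix an arbitrary $\mathbf{j}\in\mathbb{S}$. Since $a\in\mathbb{R}\subset \mathbb{C}_{\mathbf{j}}$, the set $B_r(a)\cap\mathbb{C}_{\mathbf{j}}$ is the open $\mathbb{C}_{\mathbf{j}}$-disk of radius $r$ about $a$. Writing $f_{\mathbf{j}}:=f|_{B_r(a)\cap\mathbb{C}_{\mathbf{j}}}$ and combining Definition \ref{SHolDefMON} with the Cauchy--Riemann equations \eqref{CRMMON}, a short computation gives
\[
\frac{\partial}{\partial u}f_{\mathbf{j}}(u+\mathbf{j}v)+\mathbf{j}\,\frac{\partial}{\partial v}f_{\mathbf{j}}(u+\mathbf{j}v)=0,
\]
so that $f_{\mathbf{j}}$ is $\mathbb{C}_{\mathbf{j}}$-holomorphic as an $\mathbb{R}_n$-valued function on $B_r(a)\cap\mathbb{C}_{\mathbf{j}}$.

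Second, applying the classical Taylor theorem componentwise to $f_{\mathbf{j}}=\sum_A f_{\mathbf{j},A}e_A$ on the disk $B_r(a)\cap\mathbb{C}_{\mathbf{j}}$ produces
\[
f_{\mathbf{j}}(x)=\sum_{k=0}^{+\infty}(x-a)^k\,\frac{1}{k!}\,f_{\mathbf{j}}^{(k)}(a)\qquad \text{for all } x\in B_r(a)\cap\mathbb{C}_{\mathbf{j}}.
\]
By \eqref{SDerivPartial} one has $\partial_S f|_{\mathbb{C}_{\mathbf{j}}}=f_{\mathbf{j}}'$; using that the slice derivative of a left slice monogenic function is again left slice monogenic, an induction on $k$ yields $f_{\mathbf{j}}^{(k)}(a)=\partial_S^k f(a)$. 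Because $a$ is real, this value is independent of the choice of $\mathbf{j}\in\mathbb{S}$.

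Third, the Clifford coefficients $c_k:=\frac{1}{k!}\partial_S^k f(a)\in\mathbb{R}_n$ do not depend on $\mathbf{j}$, and the radius of convergence of the power series $\sum_k(x-a)^k c_k$, measured on any slice $\mathbb{C}_{\mathbf{j}}$, is at least $r$. Hence $F(x):=\sum_{k=0}^{+\infty}(x-a)^k c_k$ is defined on the whole of $B_r(a)$ and is a left slice monogenic function there. The two left slice monogenic functions $f$ and $F$ on the axially symmetric open set $B_r(a)$ coincide on $B_r(a)\cap\mathbb{C}_{\mathbf{j}}$, so by the Representation Formula (equivalently, the Identity Principle for slice monogenic functions recalled in \cite{MR2752913}) they coincide on all of $B_r(a)$, which proves \eqref{PowSerL}. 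The right slice monogenic case is treated in the same way, with the power series written as $\sum_{k=0}^{+\infty}\frac{1}{k!}(\partial_S^k f(a))(x-a)^k$ and with right slice derivatives in place of left ones.

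The step I expect to be the most delicate is the identification $f_{\mathbf{j}}^{(k)}(a)=\partial_S^k f(a)$, which rests on the fact that iterated slice derivatives of slice monogenic functions remain slice monogenic so that the induction is legitimate; once this and the Representation Formula are invoked, the classical complex Taylor theorem does all the quantitative work.
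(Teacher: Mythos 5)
The paper itself does not prove Theorem~\ref{PowSerThm}; it is quoted from Chapter~2 of \cite{MR2752913} as background. Your strategy — restrict to a slice through the real centre $a$, obtain a complex Taylor expansion there with right-sided Clifford coefficients, observe that the coefficients are $\mathbf{j}$-independent, and propagate via the Representation Formula — is exactly the standard argument for this result, so there is nothing to compare the paper against. The overall structure is correct, including the key points that $a\in\mathbb{R}$ makes $B_r(a)\cap\mathbb{C}_{\mathbf{j}}$ a genuine complex disc and that $B_r(a)$ is an axially symmetric slice domain, so the Representation Formula applies.

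One step, however, is stated imprecisely and would not survive as written: ``applying the classical Taylor theorem componentwise to $f_{\mathbf{j}}=\sum_A f_{\mathbf{j},A}e_A$.'' The scalar components $f_{\mathbf{j},A}$ are real-valued and the equation $\bigl(\partial_u+\mathbf{j}\,\partial_v\bigr)f_{\mathbf{j}}=0$ couples them, because left multiplication by $\mathbf{j}$ permutes and mixes the $e_A$; a single $f_{\mathbf{j},A}$ satisfies no Cauchy--Riemann equation of its own. The decomposition that actually works is either (a) to view $\mathbb{R}_n$ as a finite-dimensional $\mathbb{C}_{\mathbf{j}}$-module under left multiplication by $\mathbf{j}$, choose a $\mathbb{C}_{\mathbf{j}}$-basis, and apply the scalar Taylor theorem in those components, or (b) to go back to the slice form $f=f_0+\mathbf{j}f_1$ and note that for each $A$ the scalar function $F_A(u+iv):=f_{0,A}(u,v)+i f_{1,A}(u,v)$ is holomorphic by \eqref{CRMMON}; the parity conditions \eqref{CCondmon} then force $F_A$ to be real on $\mathbb{R}$, so its Taylor coefficients at the real point $a$ are real, which is precisely why the resulting Clifford coefficients $c_k$ do not depend on $\mathbf{j}$. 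Either repair makes the step airtight; as it stands, the $e_A$-componentwise claim is the one place where the argument does not literally go through.
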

We now recall the natural product that preserves slice monogenicity of functions admitting power series expansion
as shown by Theorem \ref{PowSerThm}.
\begin{definition}
Let  $f(x) = \sum_{k=0}^{+\infty}x^{k}a_k$ and
$g(x) = \sum_{k=0}^{+\infty}x^kb_k$ be two left slice monogenic power series,
the left-star product, denoted by $\star_L$, is defined by
\begin{equation}\label{ProdLSeries}
  (f\star_L g) (x) = \sum_{\ell=0}^{+\infty} x^\ell \left(\sum_{k=0}^{\ell}a_{k}b_{\ell-k}\right).
\end{equation}
Similarly, for right slice monogenic power series
$
f(x)=\sum_{k=0}^{+\infty}a_kx^{k}
$ and
$
g(x)=\sum_{k=0}^{+\infty}b_kx^{k}
$
the right-star product, denoted by $\star_R$,
is defined by
\begin{equation}\label{ProdRSeries}
 (f\star_R g) (x) =  \sum_{\ell=0}^{+\infty} \left( \sum_{k=0}^{\ell}a_{k}b_{\ell-k}\right)x^\ell.
\end{equation}
\end{definition}
The Cauchy formula of slice monogenic functions has two different Cauchy kernels
according to left or right slice monogenicity.
Let $x,s\in \mathbb{R}^{n+1}$, with $x\not\in [s]$,
be paravectors then the slice monogenic Cauchy kernels are defined by
$$
S_L^{-1}(s,x):=-(x^2 -2 {\rm Re}  (s) x+|s|^2)^{-1}(x-\overline s),
$$
and
$$
S_R^{-1}(s,x):=-(x-\bar s)(x^2-2{\rm Re} (s)x+|s|^2)^{-1}.
$$
\begin{theorem}[The Cauchy formulas for slice monogenic functions]
\label{CauchygeneraleMONOG}
Let $U\subset\mathbb{R}^{n+1}$ be an axially symmetric domain.
Suppose that $\partial (U\cap \mathbb{C}_\mathbf{j})$ is a finite union of
continuously differentiable Jordan curves  for every $\mathbf{j}\in\mathbb{S}$ and set
 $ds_\mathbf{j}=-ds \mathbf{j}$ for $\mathbf{j}\in \mathbb{S}$.
 Let $f$ be
a slice monogenic function on an open set that contains $\overline{U}$ and set
$x= x_0+\underline{x}$, $s=s_0+\underline{s}$.
 Then
\begin{equation}\label{cauchynuovo}
 f(x)=\frac{1}{2 \pi}\int_{\partial (U\cap \mathbb{C}_\mathbf{j})} S_L^{-1}(s,x)\, ds_\mathbf{j}\,  f(s),\qquad\text{for any }\ \  x\in U.
\end{equation}
If $f$ is a right slice monogenic function on a set that contains $\overline{U}$,
then
\begin{equation}\label{Cauchyright}
 f(x)=\frac{1}{2 \pi}\int_{\partial (U\cap \mathbb{C}_\mathbf{j})}  f(s)\, ds_\mathbf{j}\, S_R^{-1}(s,x),\qquad\text{for any }\ \  x\in U.
 \end{equation}
 Moreover, the integrals  depend neither on $U$ nor on the imaginary unit $\mathbf{j}\in\mathbb{S}$.
\end{theorem}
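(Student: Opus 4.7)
The plan is to reduce the Cauchy formula to the classical Cauchy integral in one complex variable on the plane $\mathbb{C}_\mathbf{j}$, applied slicewise, and then to extend the resulting identity off the slice by the representation formula forced by the slice structure \eqref{CCondmon}. The first step is the on-slice reduction: for $x,s\in U\cap\mathbb{C}_\mathbf{j}$ the variables commute, and the factorization $x^2-2\,\mathrm{Re}(s)\,x+|s|^2=(x-s)(x-\bar s)$ shows that both slice kernels collapse to $S_L^{-1}(s,x)=S_R^{-1}(s,x)=(s-x)^{-1}$. The Cauchy--Riemann equations \eqref{CRMMON} imply that $f_\mathbf{j}:=f|_{U\cap\mathbb{C}_\mathbf{j}}$ is $\mathbb{C}_\mathbf{j}$-holomorphic with values in $\mathbb{R}_n$, so the classical Cauchy formula applied component-by-component, read with the orientation $ds_\mathbf{j}=-ds\,\mathbf{j}$, delivers \eqref{cauchynuovo} whenever $x\in U\cap\mathbb{C}_\mathbf{j}$; the right version \eqref{Cauchyright} follows analogously.

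The second step extends the identity to a general $x=x_0+\underline{x}\in U$ with $\underline{x}\neq 0$. Setting $\mathbf{j}_x:=\underline{x}/|\underline{x}|$, the compatibility conditions \eqref{CCondmon} force the representation formula
\[
f(x)=\tfrac{1}{2}(1-\mathbf{j}_x\mathbf{j})\,f(x_0+\mathbf{j}|\underline{x}|)+\tfrac{1}{2}(1+\mathbf{j}_x\mathbf{j})\,f(x_0-\mathbf{j}|\underline{x}|),
\]
which expresses $f(x)$ through two values of $f$ on the slice $\mathbb{C}_\mathbf{j}$. Substituting the on-slice formula of the first step into each of these two values produces an integrand of the form
\[
\tfrac{1}{2}(1-\mathbf{j}_x\mathbf{j})(s-x_0-\mathbf{j}|\underline{x}|)^{-1}+\tfrac{1}{2}(1+\mathbf{j}_x\mathbf{j})(s-x_0+\mathbf{j}|\underline{x}|)^{-1},
\]
and a direct algebraic manipulation, bringing the two fractions to a common denominator and using $\mathbf{j}^2=-1$, should identify this expression with $S_L^{-1}(s,x)=-(x^2-2\,\mathrm{Re}(s)\,x+|s|^2)^{-1}(x-\bar s)$ at the off-slice point $x=x_0+\mathbf{j}_x|\underline{x}|$. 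The right-monogenic case \eqref{Cauchyright} then goes through by the mirror computation, with the prefactors placed on the right.

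For the independence assertion, fix $x$ and observe that $s\mapsto S_L^{-1}(s,x)$ is right slice monogenic on $\mathbb{R}^{n+1}\setminus[x]$, so its restriction to $\mathbb{C}_\mathbf{j}$ is $\mathbb{C}_\mathbf{j}$-holomorphic off the two punctures $\{x_0\pm\mathbf{j}|\underline{x}|\}$. Cauchy's theorem on $\mathbb{C}_\mathbf{j}$ therefore yields independence from the contour $\partial(U\cap\mathbb{C}_\mathbf{j})$, hence from $U$. Independence from $\mathbf{j}\in\mathbb{S}$ follows because the right-hand side of \eqref{cauchynuovo}, viewed as a function of $x$, is itself left slice monogenic on a small axially symmetric neighborhood of $[x]$ and agrees with $f$ on one slice, so the identity principle for slice monogenic functions extends the agreement to all of $U$. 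The main obstacle I foresee is the algebraic identity at the heart of the second step: confirming that the two on-slice Cauchy integrands, weighted by the complementary factors $\tfrac{1}{2}(1\mp\mathbf{j}_x\mathbf{j})$ and summed, reconstruct exactly the non-commutative kernel $S_L^{-1}(s,x)$ at off-slice points. Once that identity is secured, the remainder of the argument reduces to classical complex analysis applied slicewise.
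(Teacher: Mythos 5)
The paper does not actually prove Theorem \ref{CauchygeneraleMONOG}; it is recalled from the literature (see \cite{MR2752913}) and stated without proof. Your strategy --- slicewise reduction to the classical Cauchy formula on $\mathbb{C}_\mathbf{j}$, followed by extension to off-slice points via the representation formula and identification of the resulting kernel --- is precisely the standard proof in that reference, and it is also consistent with the toolbox the present paper uses elsewhere: the decomposition of the derivative kernel in the proof of Lemma \ref{CoefLemma} is exactly the $k=0$ case of your Step 2 identity.

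One point in your Step 2 deserves more care than ``a direct algebraic manipulation, bringing the two fractions to a common denominator and using $\mathbf{j}^2=-1$.'' Since $s\in\mathbb{C}_\mathbf{j}$ while $x=u+\mathbf{j}_x v$ lies in a different slice, $s$ and $x$ do not commute, and the common-denominator computation that is actually available (pulling the $\mathbb{C}_\mathbf{j}$-factors $(s-w)^{-1}(s-\bar w)^{-1}$ to the right) produces
\[
\tfrac{1}{2}(1-\mathbf{j}_x\mathbf{j})(s-w)^{-1}+\tfrac{1}{2}(1+\mathbf{j}_x\mathbf{j})(s-\bar w)^{-1}
=(s-\bar x)\bigl(s^2-2\,\mathrm{Re}(x)\,s+|x|^2\bigr)^{-1},
\]
which is the other standard form of the left Cauchy kernel, not literally the expression $-(x^2-2\,\mathrm{Re}(s)\,x+|s|^2)^{-1}(x-\bar s)$ in the statement. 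To finish you must invoke the equivalence of the two kernel forms, which is most cleanly obtained by the same identity-principle argument you already use for $\mathbf{j}$-independence: both expressions are left slice monogenic in $x$ away from $[s]$ and agree on $\mathbb{C}_\mathbf{j}$, where each collapses to $(s-x)^{-1}$. With that (small but genuine) extra step supplied, the proposal is correct and matches the standard argument.
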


\begin{theorem}[Derivatives of slice monogenic functions]
Let $U\subset\mathbb{R}^{n+1}$ be an axially symmetric domain.
Suppose that $\partial (U\cap \mathbb{C}_\mathbf{j})$ is a finite union of
continuously differentiable Jordan curves  for every $\mathbf{j}\in\mathbb{S}$ and set
 $ds_\mathbf{j}=-ds \mathbf{j}$ for $\mathbf{j}\in \mathbb{S}$.
 Let $f$ be
a left slice monogenic function on an open set that contains $\overline{U}$ and set
$x= x_0+\underline{x}$, $s=s_0+\underline{s}$. Then the slice derivatives $\partial^k_{x_0} f(x)$ are given by
 \begin{equation}\label{sderiv}
\partial^k_{x_0} f(x)
=\frac{k!}{2 \pi} \int_{\partial (U\cap \mathbb{C}_\mathbf{j})}
(x^2-2s_0x+|s|^2)^{-k-1} (x-\overline{s})^{*(k+1)} ds_\mathbf{j} f(s),
\end{equation}
where
\begin{equation}\label{stellina}
(x-\overline{s})^{*k}=\sum_{m=0}^{k}\frac{k!}{(k-m)!m!}
x^{k-m}\overline{s}^m,
\end{equation}
 Moreover,
the integral  depends neither on $U$ nor on the imaginary unit $\mathbf{j}\in\mathbb{S}$.
\end{theorem}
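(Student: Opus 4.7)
The plan is to differentiate the Cauchy formula \eqref{cauchynuovo} $k$ times under the integral sign with respect to $x_0$, so that the entire theorem reduces to computing $\partial_{x_0}^k S_L^{-1}(s,x)$ in closed form. The interchange of $\partial_{x_0}^k$ with the contour integral is justified because $s$ ranges over the compact set $\partial(U\cap\mathbb{C}_{\mathbf j})$, while for each fixed $x\in U$ the factor $Q(x):=x^2-2s_0x+|s|^2$ stays bounded away from zero in a neighbourhood of $x$; hence every $x_0$-derivative of $S_L^{-1}(s,x)$ is uniformly bounded in $s$. The identity \eqref{SDerivPartial} then ensures that $\partial_{x_0}^k$ applied to the slice monogenic function $f$ is precisely the iterated slice derivative $\partial_S^k f$, so that establishing the displayed formula for the $k$-th $x_0$-derivative of the kernel is equivalent to the theorem.

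The computational core is to establish, by induction on $k$, the closed-form identity
$$
\partial_{x_0}^{k}\, S_L^{-1}(s,x)\;=\;k!\,Q(x)^{-(k+1)}(x-\overline{s})^{*(k+1)},
$$
up to the sign absorbed by the convention used in \eqref{stellina}. The base case is the definition of $S_L^{-1}$. For the inductive step I would differentiate the right-hand side, using first the termwise derivative of \eqref{stellina},
$\partial_{x_0}(x-\overline{s})^{*(k+1)}=(k+1)(x-\overline{s})^{*k}$,
and second the fact that $Q(x)$ commutes with $(x-s_0)$ (both are polynomials in the single Clifford variable $x$ with real coefficients), giving $\partial_{x_0}Q(x)^{-(k+1)}=-2(k+1)(x-s_0)\,Q(x)^{-(k+2)}$. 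Combining these by the Leibniz rule and factoring out $Q^{-(k+2)}$ reduces the step to the purely algebraic identity
$$
-2(x-s_0)(x-\overline{s})^{*(k+1)}+Q(x)(x-\overline{s})^{*k}\;=\;-(x-\overline{s})^{*(k+2)},
$$
which I would verify by inserting the explicit expansion \eqref{stellina} on both sides and collapsing the resulting sums using the scalar relation $\overline{s}^{\,2}=2s_0\overline{s}-|s|^2$.

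The main obstacle I anticipate lies precisely in this last algebraic identity: since $x$ and $\overline{s}$ need not commute when they lie on different slices, each term of the star-power expansion must be manipulated while tracking factor orderings. A conceptually cleaner alternative is to restrict $x$ to the complex slice $\mathbb{C}_{\mathbf j_s}$ containing $s$: there all quantities commute, $Q(x)$ factors as $(x-s)(x-\overline{s})$, the kernel collapses to the classical Cauchy kernel $(s-x)^{-1}$, and the elementary formula $\partial_x^{k}(s-x)^{-1}=k!(s-x)^{-(k+1)}$ yields the identity on that slice at once; since both sides of \eqref{sderiv} are slice monogenic in $x$ on $U\setminus[s]$, the Identity Principle for slice monogenic functions then extends the equality to every $x\in U$. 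Finally, the independence of the integral from $U$ and from $\mathbf{j}\in\mathbb{S}$ is inherited directly from the corresponding invariance of \eqref{cauchynuovo}, since differentiation under the integral sign preserves both properties.
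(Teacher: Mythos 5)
The paper states this theorem without proof --- it is recalled from the slice monogenic literature (see the reference \cite{MR2752913}) --- so there is no in-paper argument to compare against; I will only assess your proposal on its own terms, and it is sound. Differentiating the Cauchy formula \eqref{cauchynuovo} under the integral sign is correctly justified: for fixed $x\in U$ and $s$ ranging over the compact contour $\partial(U\cap\mathbb{C}_{\mathbf j})$, the associated spheres $[s]$ lie outside $U$, so $x^2-2s_0x+|s|^2$ is bounded away from zero by compactness and every $x_0$-derivative of the kernel is uniformly bounded in $s$. Of your two routes for identifying $\partial_{x_0}^k S_L^{-1}$, the slice-restriction argument is the cleaner and the standard one in this setting: for $x\in\mathbb{C}_{\mathbf j}$ all the relevant quantities commute, $x^2-2s_0x+|s|^2=(x-s)(x-\bar s)$, the kernel collapses to the classical $(s-x)^{-1}$, and the one-variable formula $\partial_{x_0}^k(s-x)^{-1}=k!\,(s-x)^{-(k+1)}$ gives the identity on the slice; since both sides of \eqref{sderiv} are left slice monogenic in $x$, the Representation Formula (equivalently, the Identity Principle for slice functions) extends equality to all of $U$. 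Your inductive alternative also works in principle once one keeps every $x$-polynomial on the left and every $\bar s$-power on the right, so that the purported algebraic identity is a formal polynomial identity in commuting symbols, but it is noticeably more bookkeeping-heavy, as you anticipate. One point is better made explicit than left as ``up to a sign absorbed by the convention'': carrying out either computation carefully shows that \eqref{stellina} must carry alternating signs $(-1)^m$ (so that on a commutative slice $(x-\bar s)^{*k}$ reduces to the ordinary power $(x-\bar s)^k$) and that the right-hand side of \eqref{sderiv} needs an overall factor $(-1)^{k+1}$; without these, already the $k=0$ case of \eqref{sderiv} fails to reproduce \eqref{cauchynuovo}, since $S_L^{-1}(s,x)=-(x^2-2s_0x+|s|^2)^{-1}(x-\bar s)$.
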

A similar formula holds also for right slice monogenic functions.

\medskip
After the basic facts on slice monogenic functions we can introduce some function spaces of entire
slice monogenic functions in the spirit or the quaternionic version introduced in the book \cite{ACSBOOK2}.
Let $f$  be a non-constant entire monogenic function. We define
$$
M_{f_\mathbf{j}}(r)=\max_{|z|=r, x\in \mathbb{C}_\mathbf{j} }|f(x)|,\ \ \ \text{ for}\ \ r\geq 0
$$
and
$$
M_f(r)=\max_{|x|=r}|f(x)|,\ \ \ \text{ for}\ \ r\geq 0.
$$
Then, see Chapter 5 \cite{ACSBOOK2}, we have for intrinsic functions that $M_{f_\mathbf{j}}(r)=M_f(r)$.
\begin{definition}
Let $f$ be an entire slice monogenic function. Then we say that $f$ is of finite order if there exists $\kappa>0$ such that
$$
M_f(r)< e^{r^\kappa}
$$
for sufficiently large $r$. The greatest lower bound $\rho$ of such numbers $\kappa$ is called order of $f$.
Equivalently, we can define the order as
$$
\rho=\limsup_{r\to\infty}\frac{\ln\ln M_f(r)}{\ln r}.
$$
\end{definition}
\begin{definition}
Let $f$  be an entire slice monogenic function of order $\rho$ and let $A>0$ be such that for sufficiently large values of $r$ we have
$$
M_f(r)< e^{Ar^\rho}.
$$
We say that $f$ of order $\rho$ is of type $\sigma$ if $\sigma$ is the greatest lower bound of such numbers and we have
$$
\sigma=\limsup_{r\to\infty}\frac{\ln M_f(r)}{ r^\rho}.
$$
 Moreover:
\begin{itemize}
\item
When $\sigma=0$ we say that $f$ is of minimal type.
\item
When $\sigma=\infty$ we say that $f$ is of maximal type.
\item
When $\sigma \in (0,\infty)$ we say that $f$ is of normal type.
\end{itemize}
\end{definition}
The constant functions are said to be of minimal type of order zero.

\begin{definition} Let $p\geq 1$.
We denote by $\mathcal{S\!M}^{p}$ the space of entire slice monogenic
functions with either order lower than $p$ or order equal to $p$ and finite type.
It consists of those functions $f:\mathbb{R}^{n+1}\to \mathbb{R}_n$, for which there exist constants $B, C >0$ such that
\begin{equation}\label{ABC}
|f(x)|\leq C e^{B|x|^p}.
\end{equation}
Let $(f_m)_{m\in \mathbb{N} }$, $f_0\in \mathcal{S\!M}^p$.
Then $f_m \to f_0$  in $\mathcal{S\!M}^p$ if there exists some $B > 0$ such that
\begin{equation}
\lim\limits_{m\rightarrow\infty} \sup_{x\in \mathbb{R}^{n+1}}\Big|(f_m(x)-f_0(x))e^{-B|x|^p}\Big|=0.
\end{equation}
Functions in $\mathcal{S\!M}^p$ that are left slice monogenic will be denoted by $\mathcal{S\!M}^p_L$, while
right slice monogenic will be denoted by $\mathcal{S\!M}^p_R$.
\end{definition}
We now give a characterization of functions in $\mathcal{S\!M}^p$ in terms of their Taylor coefficients.
In order to prove our results we need
 some very well known estimates on the Gamma function $\Gamma$ and on the binomial. We collect them
in the following lemma.
\begin{lemma}\label{estigam}
We have the following estimates:
\begin{itemize}
\item[(I)]
 For $j$, $k\in \mathbb{N}$, the we have
$
(j+k)!\leq 2^{j+k}j!k!.
$
\item[(II)]
 For $n$, $k\in \mathbb{N}$, the we have
$
\Gamma(n+1)\Gamma(k+1)\leq \Gamma(n+k+2).
$
\item[(III)]
 For $q\in [1,\infty)$ and $n\in \mathbb{N}$ we have
$
\Gamma\Big(\frac{n}{q}+1\Big)\leq (n!)^{1/q}.
$
\item[(IV)]
$
(a+b)^p\leq 2^p(a^p+b^p),\ \ a>0, \ \ b>0, \ \ p>0.
$
\end{itemize}
\end{lemma}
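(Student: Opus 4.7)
The plan is to treat each of the four inequalities independently, since each has a standard classical proof and they are logically unrelated beyond being collected for later convenience.

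First I would dispatch (I) and (IV), which are purely combinatorial/algebraic. For (I), the binomial coefficient $\binom{j+k}{j} = (j+k)!/(j!\,k!)$ appears as a single term in the expansion of $(1+1)^{j+k} = \sum_{m=0}^{j+k}\binom{j+k}{m}$, so $\binom{j+k}{j}\leq 2^{j+k}$, giving $(j+k)!\leq 2^{j+k} j!\, k!$. For (IV), I would use $a+b \leq 2\max(a,b)$, which after raising to the $p$-th power gives $(a+b)^p\leq 2^p\max(a,b)^p \leq 2^p(a^p+b^p)$.

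For (II), I would use the Beta function representation
\[
B(n+1,k+1)=\frac{\Gamma(n+1)\Gamma(k+1)}{\Gamma(n+k+2)}=\int_0^1 t^{n}(1-t)^{k}\,dt.
\]
For $n,k\in\mathbb{N}$, the integrand satisfies $0\leq t^n(1-t)^k\leq 1$ on $[0,1]$, hence $B(n+1,k+1)\leq 1$ and the claim follows. A purely arithmetic alternative is the factorial identity $(n+k+1)!=(n+k+1)\binom{n+k}{n}n!\,k!\geq n!\,k!$.

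The only non-trivial estimate is (III), and the main obstacle—minor as it is—lies in identifying the right convexity argument. My approach is to invoke the log-convexity of $\Gamma$ on $(0,\infty)$ (Bohr--Mollerup, or directly via H\"older applied to the Euler integral). Since $q\geq 1$ implies $1/q\in(0,1]$, I would write
\[
\frac{n}{q}+1 \;=\; \frac{1}{q}(n+1)+\Bigl(1-\frac{1}{q}\Bigr)\cdot 1,
\]
which is a convex combination of the points $n+1$ and $1$. Convexity of $\log\Gamma$ then yields
\[
\log\Gamma\!\Bigl(\frac{n}{q}+1\Bigr) \;\leq\; \frac{1}{q}\log\Gamma(n+1)+\Bigl(1-\frac{1}{q}\Bigr)\log\Gamma(1) \;=\; \frac{1}{q}\log(n!),
\]
and exponentiation gives $\Gamma(n/q+1)\leq (n!)^{1/q}$, with equality at $q=1$ as a sanity check.
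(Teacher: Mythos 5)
Your proofs of all four items are correct. The paper does not actually prove Lemma~\ref{estigam}---it simply introduces these as ``very well known estimates'' and moves on---so there is no proof to compare against line by line. It is worth noting, though, that your log-convexity argument for (III) (equivalently, H\"older applied to the Euler integral $\Gamma(n/q+1)=\int_0^\infty (e^{-t}t^n)^{1/q}(e^{-t})^{1-1/q}\,dt$) is precisely the mechanism the authors themselves deploy when proving Lemma~\ref{p1}, the multi-index extension of (III): there they split the $\Gamma$-integral by H\"older and then invoke Lemma~\ref{estigam}~(III) at the final step. So your route is the one the paper's own later argument presupposes, and the remaining items (binomial bound for (I), Beta-function or factorial argument for (II), $a+b\leq 2\max(a,b)$ for (IV)) are the standard ones.
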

\begin{lemma} Let $x\in \mathbb{R}^{n+1}$ then the
Mittag-Leffler function
$$
E_{\alpha,\beta}(x)=\sum_{k=1}^\infty\frac{x^k}{\Gamma(\alpha k+\beta)}
$$
is an entire slice monogenic function of order $1/\alpha$  (and of type 1) for $\alpha >0$ and $Re(\beta)>0$.
\end{lemma}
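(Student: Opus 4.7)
The plan is to verify three assertions: entire slice monogenicity, order equal to $1/\alpha$, and type equal to $1$. All three rest on the Stirling asymptotic
$$
\log|\Gamma(\alpha k + \beta)| = \alpha k \log k + \alpha k(\log \alpha - 1) + O(\log k),\qquad k \to \infty,
$$
applied to the Taylor coefficients $a_k := 1/\Gamma(\alpha k + \beta)$.

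First I would show that $E_{\alpha,\beta}$ is entire slice monogenic. The Stirling estimate gives $|a_k|^{1/k} \to 0$, so the power series $\sum_{k \geq 1} x^k a_k$ converges absolutely and uniformly on compact subsets of $\mathbb{R}^{n+1}$. By the converse direction of Theorem \ref{PowSerThm}, a convergent power series in the paravector variable is slice monogenic on its ball of convergence; since the radius of convergence is infinite, $E_{\alpha,\beta}$ is entire. For order and type I would then reduce to the classical complex case. Assuming first $\beta \in \mathbb{R}$, the coefficients $a_k$ are real, so $E_{\alpha,\beta}$ is intrinsic, and the identity $M_{f_\mathbf{j}}(r) = M_f(r)$ recalled just before the definition of order lets one replace the paravector maximum modulus by that of the restriction to any slice $\mathbb{C}_\mathbf{j}$, which is the usual complex Mittag-Leffler function. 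The classical coefficient formulas
$$
\rho = \limsup_{k \to \infty} \frac{k \log k}{\log(1/|a_k|)}, \qquad (\sigma e \rho)^{1/\rho} = \limsup_{k \to \infty} k^{1/\rho}|a_k|^{1/k},
$$
combined with the Stirling expansion, give $\rho = 1/\alpha$ (the denominator in the first lim sup is $\alpha k \log k + O(k)$) and $k^{\alpha}|a_k|^{1/k} \to (e/\alpha)^{\alpha}$, whence $\sigma = 1$.

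The main obstacle is the case $\beta \notin \mathbb{R}$ with ${\rm Re}(\beta) > 0$, where $E_{\alpha,\beta}$ is no longer intrinsic and the equality $M_{f_\mathbf{j}} = M_f$ is unavailable. I would handle it by using the obvious upper bound $M_f(r) \leq \sum_{k \geq 1} r^k |a_k|$ to deduce $\rho \leq 1/\alpha$ and $\sigma \leq 1$, and by evaluating at $x = r \in \mathbb{R}_+$, where $|E_{\alpha,\beta}(r)|$ remains comparable to the real Mittag-Leffler function of the corresponding real parameters, for the matching lower bound. Because $|\Gamma(\alpha k + \beta)|$ differs from $\Gamma(\alpha k + {\rm Re}(\beta))$ only by subexponential factors in $k$, the Stirling-based calculation transfers verbatim and yields $\rho = 1/\alpha$, $\sigma = 1$ in the general case.
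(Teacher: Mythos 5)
Your proposal is correct and follows the same route the paper intends: the paper's own proof of this lemma is a single line (``the proof follows the same line as in the complex case''), and your argument simply supplies the details of that reduction — convergence and slice monogenicity of the power series, the coefficient formulas for order and type via Stirling, and the reduction to a complex slice in the intrinsic case. Your extra care in the non-intrinsic ($\beta\notin\mathbb{R}$) case is a genuine refinement the paper glosses over, but it does not change the approach.
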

\begin{proof}
The proof follows the same line as in  complex case.
\end{proof}
We are now ready to prove a crucial result which is the slice monogenic version of the complex version proved in \cite{ACSS18}.
\begin{lemma}\label{CoefLemma} Let $p\geq1$.
A function
$$
f(x)=\sum_{k=0}^\infty  x^k\alpha_k
$$
 belongs to $\mathcal{S\!M}_L^p$ if and only if there exist constants $C_f,b_f>0$ such that
\begin{equation}\label{estfj}
|\alpha_k|\leq C_f \frac{b_f^k}{\Gamma(\frac{k}{p}+1)}.
\end{equation}
Furthermore, a sequence $f_m$ in $\mathcal{S\!M}_L^p$ tends to zero if and only if $C_{f_m}\to0$ and $b_{f_m} < b$ for some $b>0$.
 \end{lemma}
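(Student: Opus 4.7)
The plan is to transcribe the classical complex-analytic characterisation of entire functions of finite order and type into the slice monogenic setting. The key observation is that the restriction of $f$ to any complex plane $\mathbb{C}_\mathbf{j}$ is $\mathbb{C}_\mathbf{j}$-holomorphic with the same power series coefficients $\alpha_k$, so ordinary one-variable Cauchy estimates become available and nothing deeper than the restriction principle is needed.

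For the forward direction I would fix any $\mathbf{j}\in\mathbb{S}$, write $f_\mathbf{j}=f|_{\mathbb{C}_\mathbf{j}}$, and apply the one-variable Cauchy integral formula on $\mathbb{C}_\mathbf{j}$ together with $M_{f_\mathbf{j}}(r)\leq M_f(r)\leq Ce^{Br^p}$ to obtain
\[
|\alpha_k|\leq\frac{M_{f_\mathbf{j}}(r)}{r^k}\leq\frac{Ce^{Br^p}}{r^k}\qquad(r>0).
\]
Minimising in $r$ yields the optimum $r^p=k/(Bp)$ and the estimate $|\alpha_k|\leq C(eBp/k)^{k/p}$. A Stirling comparison, using $\Gamma(k/p+1)\sim\sqrt{2\pi k/p}\,(k/(pe))^{k/p}$, shows that $(eBp/k)^{k/p}$ equals $B^{k/p}/\Gamma(k/p+1)$ up to a polynomial factor in $k$, which can be absorbed by an arbitrarily small enlargement of the geometric base. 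This produces the bound \eqref{estfj} with $b_f$ any constant strictly greater than $B^{1/p}$.

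For the converse I would bound $f$ termwise against a Mittag-Leffler function:
\[
|f(x)|\leq\sum_{k=0}^\infty|x|^k|\alpha_k|\leq C_f\sum_{k=0}^\infty\frac{(b_f|x|)^k}{\Gamma(k/p+1)}=C_f\bigl(1+E_{1/p,1}(b_f|x|)\bigr).
\]
By the lemma on the Mittag-Leffler function recalled just before the statement, $E_{1/p,1}$ is entire of order $p$ and type $1$, so $E_{1/p,1}(b_f|x|)\leq C'e^{(b_f^p+\varepsilon)|x|^p}$ for every $\varepsilon>0$ and all $|x|$ large. Combining, $|f(x)|\leq C''e^{B|x|^p}$ with $B$ arbitrarily close to $b_f^p$ from above, hence $f\in\mathcal{S\!M}_L^p$. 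For the convergence assertion I would rerun the same two arguments uniformly in $m$: if $f_m\to 0$ in $\mathcal{S\!M}_L^p$ there is a common $B$ with $\varepsilon_m:=\sup_x|f_m(x)|e^{-B|x|^p}\to 0$, and the Cauchy step then furnishes $|\alpha_k^{(m)}|\leq\varepsilon_m\,b^k/\Gamma(k/p+1)$ with $b$ depending only on $B$; conversely a uniform bound $b_{f_m}\leq b$ together with $C_{f_m}\to 0$ makes the Mittag-Leffler comparison yield a single $B$ for which $\sup_x|f_m(x)|e^{-B|x|^p}\to 0$.

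The only genuine technical point is the Stirling bookkeeping that connects the Cauchy-optimum $(eBp/k)^{k/p}$ to the normalisation $\Gamma(k/p+1)^{-1}$ in the statement; since $C_f$ and $b_f$ are free constants, polynomial-in-$k$ factors are harmless, and one may alternatively route the argument through estimate (III) of Lemma \ref{estigam} (which gives $\Gamma(k/p+1)\leq(k!)^{1/p}$) to pass freely between the $\Gamma$-normalisation and the equivalent $(k!)^{1/p}$-normalisation. The hypercomplex setting plays no further role: the derivative formula \eqref{sderiv} is not needed, because restriction to a single $\mathbb{C}_\mathbf{j}$ already carries both the growth bound and the power series, and the rest is classical one-variable complex analysis.
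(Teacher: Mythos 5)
Your proposal is correct and lands on the same two technical pillars the paper uses — a Cauchy estimate optimised over the radius, then a Stirling comparison for the forward implication, and a termwise bound by the slice Mittag-Leffler series for the converse — but the route to the coefficient estimate is cleaner and, in a useful sense, more elementary. The paper starts from the slice Cauchy formula for derivatives \eqref{sderiv}, decomposes its kernel via the Representation formula into the two scalar pieces $\tfrac{1-\mathbf{ij}}{2}(s-w)^{-k-1}$ and $\tfrac{1+\mathbf{ij}}{2}(s-\overline{w})^{-k-1}$, estimates $f^{(k)}(w)$ and $f^{(k)}(\overline{w})$ separately on circles $|s-s_i|=\tau|s_i|$, recombines using $|\tfrac{1\pm\mathbf{ij}}{2}|\le 1$, and only then passes to $\alpha_k=\tfrac{1}{k!}\partial^k_{x_0}f(0)$ by a maximum-modulus step. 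You shortcut all of this by noting that $\alpha_k$ is already determined by the restriction $f_{\mathbf j}$ to a single plane $\mathbb{C}_{\mathbf j}$, which is $\mathbb{C}_{\mathbf j}$-holomorphic with the same Taylor coefficients and satisfies $M_{f_{\mathbf j}}(r)\le M_f(r)\le Ce^{Br^p}$; the one-variable Cauchy estimate then applies directly and the Representation-formula decomposition is never needed. What the paper's longer route buys is a pointwise bound on $|\partial^k_{x_0}f(x)|$ at an arbitrary paravector $x$, but since the lemma needs only the value at $x=0$ this extra generality is not used. One cosmetic caveat worth making explicit if you write this up: in a Clifford algebra the estimate $|z^{-k-1}f_{\mathbf j}(z)|\le|z|^{-k-1}|f_{\mathbf j}(z)|$ is not the complex triangle inequality verbatim; the product-norm submultiplicativity only costs a dimensional constant and this is absorbed into $C_f$, exactly as the constants $|\tfrac{1\pm\mathbf{ij}}{2}|$ are in the paper. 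The converse direction and the continuity statement ($C_{f_m}\to 0$ with $b_{f_m}$ bounded) are handled identically to the paper by tracking constants through the Mittag-Leffler comparison, so no gap there.
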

\begin{proof} We consider the case of left slice monogenic functions the prove for right slice monogenic functions follows the same lines.
We first prove that if $f\in \mathcal{S\!M}_L^p$ we have the estimates (\ref{estfj}) on the coefficients $\alpha_k$, for $k\in \mathbb{N}_0$.
Observe that the kernel
$$
(x,s)\mapsto (x^2-2s_0x+|s|^2)^{-k-1} (x-\overline{s})^{*(k+1)}
$$
in formula (\ref{sderiv})
can be decomposed by the Representation formula.
Moreover,
the zeros of the function $x\mapsto x^2-2s_0x+|s|^2$  consist of real points or of a 2-sphere.
In fact, on $\mathbb{C}_{\mathbf{j}_x}$ we find only the point $x$ as a singularity and the result follows from the Cauchy formula on the plane $\mathbb{C}_{\mathbf{j}_x}$.
In  the complex plane $\mathbb{C}_\mathbf{j}$ for $\mathbf{j}\not=\mathbf{j}_x$ if the singularities are real we obtain again the Cauchy formula of complex analysis.
It the zeros are  not real and $\mathbf{j}\not=\mathbf{j}_x$ then on any complex plane
$C_\mathbf{j}$ we find the two zeros $s_{1,2}=x_0\pm \mathbf{j}|\underline{x}|$ in this case using  the representation formula we
have the decomposition:
$$
(x^2-2s_0x+|s|^2)^{-k-1} (x-\overline{s})^{*(k+1)}=
\frac{1-\mathbf{ij}}{2}\ \frac{1}{(s-w)^{k+1}}+\frac{1+\mathbf{ij}}{2}\ \frac{1}{(s-\overline{w})^{k+1}}
$$
for $x=u+\mathbf{i}v$ and $w=u+\mathbf{j}v$.
So the integral representation of the derivatives becomes
$$
\partial^k_{x_0} f(x)
=\frac{k!}{2 \pi} \int_{\partial (U\cap \mathbb{C}_\mathbf{j})}\Big(
\frac{1-\mathbf{ij}}{2}\ \frac{1}{(s-w)^{k+1}}+\frac{1+\mathbf{ij}}{2}\ \frac{1}{(s-\overline{w})^{k+1}}\Big)ds_\mathbf{j} f(s)
$$
and also
\begin{equation}\label{dervate}
\begin{split}\partial^k_{x_0} f(x)
&= \frac{1-\mathbf{ij}}{2}\ \frac{k!}{2 \pi} \ \int_{\partial (U_1\cap \mathbb{C}_\mathbf{j})}
\frac{1}{(s-w)^{k+1}}ds_\mathbf{j} f(s)
\\
&
+\frac{1+\mathbf{ij}}{2}\ \frac{k!}{2 \pi} \int_{\partial (U_2\cap \mathbb{C}_\mathbf{j})}\frac{1}{(s-\overline{w})^{k+1}}ds_\mathbf{j} f(s),
\end{split}
\end{equation}
where $\partial (U_1\cap \mathbb{C}_\mathbf{j})$ is the path of integration
in $\mathbb{C}_\mathbf{j}$ that contains the point  $s_{1}=x_0+\mathbf{j}|\underline{x}|$
and
$\partial (U_2\cap \mathbb{C}_\mathbf{j})$
is the path of integration
in $\mathbb{C}_\mathbf{j}$ that contains the point  $s_{2}=x_0-\mathbf{j}|\underline{x}|$.
Now we suppose that the above paths of integration are the two circles
$|s-s_1|=\tau|s_1|$ and $|s-s_2|=\tau|s_2|$  where $\tau>0$ is a parameter.
Now, we estimate the two terms
$$
f^{(k)}(w):= \frac{k!}{2 \pi} \ \int_{\partial (U_1\cap \mathbb{C}_\mathbf{j})}
\frac{1}{(s-w)^{k+1}}ds_\mathbf{j} f(s)
$$
and
$$
f^{(k)}(\overline{w}):=\frac{k!}{2 \pi} \int_{\partial (U_2\cap \mathbb{C}_\mathbf{j})}\frac{1}{(s-\overline{w})^{k+1}}ds_\mathbf{j} f(s)
$$
using the Cauchy formula for the derivatives in the complex plane  $\mathbb{C}_\mathbf{j}$.
Recalling that we assume the growth condition $|f(x)|\leq C_f e^{B|x|^p}$, we obtain:
\[
\begin{split}
|f^{(k)}(w)|&\leq \frac{k!}{(\tau|w|)^j} \max_{|w-z|=\tau|z|}|f(w)|
\leq \frac{C_fk!}{(\tau|w|)^k}\exp(B(1+\tau)^p|w|^p)
\end{split}
\]
and similarly for $f^{(k)}(\overline{w})$
\[
\begin{split}
|f^{(k)}(\overline{w})|
\leq \frac{C_fk!}{(\tau|\overline{w}|)^k}\exp(B(1+\tau)^p|\overline{w}|^p)
\end{split}
\]
so we conclude that
\[
\begin{split}
|f^{(k)}(w)|+|f^{(k)}(\overline{w})|
\leq 2\frac{C_fk!}{(\tau|w|)^k}\exp(B(1+\tau)^p|w|^p)
\end{split}
\]
for all $\tau> 0$, where we have used the fact that $f\in A_p$ and $|w|\leq (1+s)|z|$.
Now we can estimate the slice derivative from the formula (\ref{dervate}), precisely
$$
\partial^k_{x_0} f(x)= \frac{1-\mathbf{ij}}{2}\ f^{(k)}(w)+\frac{1+\mathbf{ij}}{2}\ f^{(k)}(\overline{w})
$$
gives
$$
|\partial^k_{x_0} f(x)|\leq |\frac{1-\mathbf{ij}}{2}|\ |f^{(k)}(w)|+|\frac{1+\mathbf{ij}}{2}|\ |f^{(k)}(\overline{w})|\leq |f^{(k)}({w})|+|f^{(k)}(\overline{w})|.
$$
The well known estimate (IV) in Lemma \ref{estigam}
gives $ (1+\tau)^p\leq 2^p(\tau^p+1)$ for all $\tau>0$.
Hence we have
\begin{equation}\label{stmadamin}
|f^{(k)}(w)|+|f^{(k)}(\overline{w})|
\leq 2C_f\frac{k!}{(\tau|w|)^k}\exp(B\cdot 2^p\tau^p|w|^p)\exp(B\cdot 2^p|w|^p)
\end{equation}
for all $w\in \mathbb{C}_\mathbf{j}$ and $\tau>0$.
Now we observe that the point
$\tau_{\rm min}=\Big(\frac{k}{2^pBp}\Big)^{1/p}\frac{1}{|w|}$
 for $w\not=0$ is the minimum
of the function
$$
\tau\mapsto \frac{1}{(\tau|w|)^k}\exp(B\cdot 2^p\tau^p|w|^p)
$$
which is the right-hand side of (\ref{stmadamin}), so that we obtain
$$
|\partial^k_{x_0} f(x)|\leq|f^{(k)}(w)|+|f^{(k)}(\overline{w})|
\leq 2C_f\  k! \Big(\frac{2^pBp }{k}\Big)^{k/p}  e^{k/p}\exp(A2^p|w|^p).
$$
If we set
$$
b:=(2^pBp e)^{1/p}
$$
we deduce the estimate
$$
|\partial^k_{x_0} f(x)|\leq|f^{(k)}(w)|+|f^{(k)}(\overline{w})|\leq 2C_f k!\frac{b^k}{k^{k/p}}   \exp(B\cdot 2^p|w|^p)
$$
for all $w\in \mathbb{C}_\mathbf{j}$.
Since
$$
\alpha_k=\frac{\partial^k_{x_0}(0)}{k!}
$$
we have, by the maximum modules principle applied in a disc centered at
the origin and with radius $\epsilon>0$ sufficiently small, in the complex plane
$\mathbb{C}_\mathbf{j}$
\[
\begin{split}
|\alpha_k|\leq  C_f \ \frac{b^k}{k^{k/p}}  \exp(B\cdot 2^p\epsilon^p)
\leq  2C_f \ \frac{b^k}{k^{k/p}}
= C'_f \ \frac{b^k}{(k!)^{1/p}}
\leq C'_f \ \frac{b^k}{\Gamma(\frac{k}{p}+1)}.
\end{split}
\]
The other direction follows form the properties of the Mittag-Leffler function because it is
of order $1/\alpha$  (and of type 1) for $\alpha >0$ and $Re(\beta)>0$,
so, in our case, $f$ is entire of order $p$.
The fact that
 $f_m$ in $\mathcal{S\!M}^p$ tends to zero if and only if $C_{f_m}\to0$ and $b_{f_m} < b$ for some $b>0$ is a consequence of the estimate
 on the $\alpha_k$.

\end{proof}

\section{Infinite order differential operators on slice monogenic functions}\label{INSL}

In this section we study a class of infinite order differential operators acting on spaces of entire slice monogenic functions.
The definition of these infinite order differential operators
preserve the slice monogenicity. In fact,
we consider the star-product of the coefficients $(u_m)_{m\in\mathbb{N}_0 }$, where $
 \mathbb{N}_0=\mathbb{N} \cup\{0\}$, of the operator and the
slice derivative $\pp_{x_0}^m f(x)$ of the slice monogenic function $f:\mathbb{R}^{n+1}\to \mathbb{R}_n$. Precisely we have:
\begin{definition}\label{OPRSLICE}
Let $p\geq 1$.
\begin{itemize}
\item
Let $(u_m)_{m\in\mathbb{N}_0 }:\mathbb{R}^{n+1}\to \mathbb{R}_n$ be  entire functions in $\mathcal{S\!M}_L$.
We define the set $\mathbf{D}^L_{p,0}$ of formal
operators defined by
$$
U_L(x,\pp_{x_0})f(x):=\sum_{m=0}^\infty u_m(x)\star_L \pp_{x_0}^m f(x),
$$
for entire functions $f$ in $\mathcal{S\!M}_L$.
\item
Let $(u_m)_{m\in \mathbb{N}_0}:\mathbb{R}^{n+1}\to \mathbb{R}_n$ be  entire functions in $\mathcal{S\!M}_R$.
We define the set $\mathbf{D}^R_{p,0}$ of formal
operators defined by
$$
U_R(x,\pp_{x_0})f(x):=\sum_{m=0}^\infty u_m(x)\star_R \pp_{x_0}^m f(x)
$$
for entire functions $f$ in $\mathcal{S\!M}_R$.
\end{itemize}
The entire functions $(u_m)_{m\in \mathbb{N}_0}$ in $\mathcal{S\!M}_L$ (resp. in $\mathcal{S\!M}_R$) satisfy the additional condition: There exists a constant $B>0$ such
 that for every $\varepsilon>0$ there exists a constant $C_\varepsilon>0$ for which
 \begin{equation}\label{estun}
 |u_m(x)|\leq C_\varepsilon \frac{\varepsilon^m}{(m!)^{1/q}}\exp(B|x|^p), \ \ \ {\rm for \ all}  \ \ \ m\in \mathbb{N}_0,
 \end{equation}
where $1/p+1/q=1$ and $1/q=0$ when $p=1$.
\end{definition}

We are now in the position to state and proof the main result of this section.
\begin{theorem}\label{Mainslice}
Let $p\geq 1$ and let $\mathbf{D}^L_{p,0}$ and $\mathbf{D}^R_{p,0}$ be the sets of formal operators in Definition \ref{OPRSLICE}.
\begin{itemize}
\item[(I)]
Let  $U_L(x,\pp_{x_0})\in \mathbf{D}^L_{p,0}$ and let $f\in \mathcal{S\!M}^p_L$, then $U_L(x,\pp_{x_0})f\in \mathcal{S\!M}^p_L$ and
the operator $U_L(x,\pp_{x_0})$ acts continuously on $\mathcal{S\!M}^p_L$, i.e., if $f_m\in \mathcal{S\!M}^p_L$
and  $f_m\to 0$ in $\mathcal{S\!M}^p_L$ then $U_L(x,\pp_{x_0})f_m\to 0$ in $\mathcal{S\!M}^p_L$.
\item[(II)]
Let  $U_L(x,\pp_{x_0})\in \mathbf{D}^R_{p,0}$ and let $f\in \mathcal{S\!M}^p_R$, then $U_R(x,\pp_{x_0})f\in \mathcal{S\!M}^p_R$ and
the operator $U_R(x,\pp_{x_0})$ acts continuously on $\mathcal{S\!M}^p_R$, i.e., if $f_m\in \mathcal{S\!M}^p_R$
and  $f_m\to 0$ in $\mathcal{S\!M}^p_R$ then $U_R(x,\pp_{x_0})f_m\to 0$ in $\mathcal{S\!M}^p_R$.
\end{itemize}
\end{theorem}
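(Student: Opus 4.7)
The plan is to translate both parts of the theorem into coefficient estimates via Lemma \ref{CoefLemma}, compute the power-series representation of $U_L f$ (resp.\ $U_R f$) explicitly, and then reduce everything to three applications of Lemma \ref{estigam}. The arguments for (I) and (II) are symmetric, so I sketch (I) and note that (II) follows by replacing $\star_L$ with $\star_R$ and reversing the ordering of coefficients in the expansions.

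I would first expand $f(x)=\sum_{k\ge 0}x^k\alpha_k$ with $|\alpha_k|\le C_f b_f^k/\Gamma(k/p+1)$ by Lemma \ref{CoefLemma}. Applying the same lemma to each entire slice monogenic coefficient $u_m$ (whose growth is governed by (\ref{estun})) and inspecting the quantitative constants in its proof, one gets $u_m(x)=\sum_{k\ge 0}x^k u_{m,k}$ with
$$
|u_{m,k}|\le \tilde C_\varepsilon\,\frac{\varepsilon^m}{(m!)^{1/q}}\,\frac{b^k}{\Gamma(k/p+1)},
$$
where $b$ depends only on $B$ (explicitly $b=(2^p B p e)^{1/p}$). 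Because the slice derivative of an entire slice monogenic function coincides with $\pp/\pp x_0$, termwise differentiation together with the definition of $\star_L$ in (\ref{ProdLSeries}) yields $(U_L f)(x)=\sum_{\ell\ge 0}x^\ell \gamma_\ell$ with
$$
\gamma_\ell=\sum_{m\ge 0}\sum_{k=0}^{\ell}u_{m,k}\,\frac{(\ell-k+m)!}{(\ell-k)!}\,\alpha_{\ell-k+m}.
$$
The goal is then to prove $|\gamma_\ell|\le C'(b')^\ell/\Gamma(\ell/p+1)$ for constants $C',b'$ depending only on $B,b_f,\varepsilon$, so that $U_L f\in\mathcal{S\!M}^p_L$ by the converse half of Lemma \ref{CoefLemma}.

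The core technical step is the estimate on $|\gamma_\ell|$. Using Lemma \ref{estigam}(I), $(\ell-k+m)!\le 2^{\ell-k+m}(\ell-k)!\,m!$, together with the identity $m!/(m!)^{1/q}=(m!)^{1/p}$, I reduce to controlling
$$
\sum_{m\ge 0}(2\varepsilon b_f)^m(m!)^{1/p}\,\sum_{k=0}^{\ell}\frac{b^k(2b_f)^{\ell-k}}{\Gamma(k/p+1)\,\Gamma((\ell-k+m)/p+1)}.
$$
Bounding $1/\Gamma((\ell-k+m)/p+1)$ by the Beta-function extension of Lemma \ref{estigam}(II), one gets a factor $1/(\Gamma((\ell-k)/p+1)\Gamma(m/p+1))$ up to a linear polynomial in $\ell+m$. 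Stirling (or a direct consequence of Lemma \ref{estigam}(III)) then bounds $(m!)^{1/p}/\Gamma(m/p+1)$ by $K\,M^m$ for some constants $K,M$, so the sum over $m$ converges once $\varepsilon$ is chosen with $2\varepsilon M b_f<1$, which (\ref{estun}) precisely permits. The remaining inner sum in $k$ is a convolution of the form $\sum_{k=0}^{\ell}A^k B^{\ell-k}/(\Gamma(k/p+1)\Gamma((\ell-k)/p+1))$, and one further application of Lemma \ref{estigam}(II) bounds it by $(A+B)^\ell/\Gamma(\ell/p+1)$ times a polynomial factor in $\ell$ that is absorbed into a slightly enlarged $b'$.

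Continuity is then free: if $f_n\to 0$ in $\mathcal{S\!M}^p_L$, Lemma \ref{CoefLemma} gives $C_{f_n}\to 0$ while $b_{f_n}$ stays uniformly bounded, so the estimate above produces the same $b'$ and a constant $C'\to 0$, meaning $U_L f_n\to 0$ in $\mathcal{S\!M}^p_L$. I expect the main obstacle to be the combinatorial bookkeeping around the three Gamma factors in $\gamma_\ell$: one must split $\Gamma((\ell-k+m)/p+1)$ losing no more than exponentials in $m$ (absorbed by the arbitrarily small $\varepsilon$) and in $\ell$ (absorbed by $b'$), and must resist the temptation to discard the $(m!)^{1/q}$ factor, whose interplay with $(\ell-k+m)!$ via $m!/(m!)^{1/q}=(m!)^{1/p}$ is exactly what allows the $m$-series to converge at all values of $p\ge 1$ in a unified way.
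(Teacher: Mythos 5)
Your proposal is correct, but the route is genuinely different from the paper's. The paper expands only $f$ into its power series $\sum_j x^j\alpha_j$, applies Lemma~\ref{CoefLemma} once to bound $|\alpha_{m+k}|$, and then uses the pointwise growth hypothesis~\eqref{estun} on $|u_m(x)|$ \emph{directly}: after taking absolute values, the bound $|u_m(x)|\leq C_\varepsilon\varepsilon^m(m!)^{-1/q}\exp(B|x|^p)$ contributes a single factor $\exp(B|x|^p)$ and a convergent $m$-series, while the remaining single power series in $|x|$ is absorbed into an exponential via the Mittag-Leffler growth. You instead convert the pointwise bound on each $u_m$ into a coefficient bound on $u_{m,k}$ by a second use of Lemma~\ref{CoefLemma}, compute the Taylor coefficients $\gamma_\ell$ of $U_Lf$ as a double convolution, and invoke the converse half of Lemma~\ref{CoefLemma} to conclude membership in $\mathcal{S\!M}^p_L$. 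Your approach is purely coefficient-theoretic and more symmetric, but it introduces a triple-Gamma bookkeeping that the paper avoids by never expanding $u_m$; in particular the $k$-convolution you have to control has no analogue in the paper's argument, which is why the paper's inequality is visibly shorter.

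The one place you need to be careful is the very last combinatorial step, bounding
$\sum_{k=0}^\ell A^k B^{\ell-k}\big/\big(\Gamma(\tfrac{k}{p}+1)\Gamma(\tfrac{\ell-k}{p}+\tfrac12)\big)$
by $(b')^\ell/\Gamma(\ell/p+1)$: you attribute this to Lemma~\ref{estigam}(II), but that lemma gives $\Gamma(a)\Gamma(b)\leq\Gamma(a+b+\text{const})$, i.e.\ an \emph{upper} bound on the product of the Gammas in the denominator, which is the wrong direction. What you actually need is a \emph{lower} bound $\Gamma(a)\Gamma(b)\gtrsim c^{-(a+b)}\Gamma(a+b)/\sqrt{a+b}$, which comes from the Legendre duplication formula (or, equivalently, a lower bound on the Beta function along $a+b=\mathrm{const}$). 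With that in hand the convolution is $\lesssim (\ell+1)\,2^{\ell/p}\max(A,B)^\ell/\Gamma(\ell/p+1)$ and the polynomial and $2^{\ell/p}$ factors are absorbed into $b'$ as you describe; so the conclusion stands, but the tool you need there is not Lemma~\ref{estigam}(II). (Your first use of that lemma, to split $1/\Gamma((\ell-k+m)/p+1)$ into a product of two Gammas, is in the correct direction and matches what the paper does for its own $\Gamma((k+m)/p+1)$ factor.) Everything else — the identity $m!/(m!)^{1/q}=(m!)^{1/p}$ controlling the $m$-series, the role of arbitrarily small $\varepsilon$, and the continuity argument via $C_{f_n}\to0$ with $b_{f_n}$ bounded — is sound.
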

\begin{proof}
Let us prove case (I), since case (II) follows with similar computations.
We apply operator $U_L(x,\pp_{x_0})\in \mathbf{D}^L_{p,0}$ (see Definition \ref{OPRSLICE}) to a function  $f\in \mathcal{S\!M}^p_L$,
\[
\begin{split}
U_L(x,\pp_{x_0})f(x)&=\sum_{m=0}^\infty u_m(x)\star_L \pp_{x_0}^m \sum_{j=0}^\infty \alpha_jx^j
\\
&
=\sum_{m=0}^\infty u_m(x)\star_L \sum_{j=0}^\infty \alpha_j\pp_{x_0}^m x^j
\\
&
=\sum_{m=0}^\infty u_m(x)\star_L \sum_{j=m}^\infty \alpha_j\frac{j!}{(j-m)!} x^{j-m}
\\
&
=\sum_{m=0}^\infty\sum_{k=0}^\infty  \alpha_{m+k} u_m(x) \frac{(k+m)!}{k!} x^{k}.
\end{split}
\]
Now we observe that
\[
\begin{split}
|U_L(x,\pp_{x_0})f(x)|&\leq
\sum_{m=0}^\infty\sum_{k=0}^\infty  |\alpha_{m+k}| |u_m(x)| \frac{(k+m)!}{k!} |x|^{k}
\end{split}
\]
and we recall that since  $U_L(x,\pp_{x_0})\in \mathbf{D}^L_{p,0}$ the coefficients $u_m(x)$ of the operator satisfy estimate
(\ref{estun}) and since $f\in \mathcal{S\!M}^p_L$, the coefficients $|\alpha_{k}|$ of $f$ satisfy estimate (\ref{estfj}) so we have
\[
\begin{split}
|U_L(x,\pp_{x_0})f(x)|&\leq
C_f C_\varepsilon\sum_{m=0}^\infty  \sum_{k=0}^\infty   \frac{\varepsilon^m}{(m!)^{1/q}} \exp(B |x|^p)\ \
 \frac{b^{m+k}}{\Gamma\Big(\frac{m+k}{p}+1\Big)} \frac{(k+m)!}{k!} |x|^{k}.
\end{split}
\]
We now use estimates (I) and (III) in Lemma \ref{estigam}
to get the estimates
\begin{equation}\label{ineq}
\begin{split}
|U_L(x,\pp_{x_0})f(x)|&\leq
C_f C_\varepsilon\sum_{m=0}^\infty  \sum_{k=0}^\infty   \frac{\varepsilon^m}{\Gamma\Big(\frac{m}{q}+1\Big)} \ \
 \frac{b^{m+k}}{\Gamma\Big(\frac{m+k}{p}+1\Big)} \frac{2^{k+m}k!m! }{k!} |x|^{k}\exp(B |x|^p)
 \\
 &
 \leq
C_f C_\varepsilon\sum_{m=0}^\infty  \sum_{k=0}^\infty  (2b)^k (2\varepsilon b)^m
\frac{1}{\Gamma\Big(\frac{m}{q}+1\Big)} \ \
 \frac{m!}{\Gamma\Big(\frac{m+k}{p}+1\Big)}   |x|^{k}\exp(B |x|^p),
\end{split}
\end{equation}
from (III) in Lemma \ref{estigam}
it follows that
$
\Gamma\Big(\frac{k+m}{p}+1\Big)\geq \Gamma\Big(\frac{k}{p}+\frac{1}{2}\Big)\Gamma\Big(\frac{m}{p}+\frac{1}{2}\Big)
$
and so we can write \eqref{ineq} as
\[
\begin{split}
|U_L(x,\pp_{x_0})f(x)|& \leq \beta(p,q,b,\varepsilon)
C_f C_\varepsilon\sum_{k=0}^\infty   \frac{(2b)^k}{\Gamma\Big(\frac{k}{p}+\frac{1}{2}\Big)} |x|^{k}\exp(B |x|^p)
\end{split}
\]
with the position
\begin{equation}\label{betaserie}
\beta(p,q,b,\varepsilon):=\sum_{m=0}^\infty  (2\varepsilon b)^m
 \frac{m!}{\Gamma\Big(\frac{m}{p}+\frac{1}{2}\Big)\Gamma\Big(\frac{m}{q}+1\Big)}
\end{equation}
where we can show that the series (\ref{betaserie})
is convergent, for $\varepsilon $ is arbitrary small the series converges, using the asymptotic expansion of the Gamma function
 $$
  (2\varepsilon b)^m
 \frac{m!}{\Gamma\Big(\frac{m}{p}+\frac{1}{2}\Big)\Gamma\Big(\frac{m}{q}+1\Big)}  \sim\frac{m^m(2\varepsilon b)^m   }{\Big(\frac{m}{p}\Big)^{m/p}\Big(\frac{m}{q}\Big)^{m/q} }=(2\varepsilon b)^m [p^{1/p}q^{1/q}]^m.
 $$
We finally obtain
 \[
\begin{split}
|U_L(x,\pp_{x_0})f(x)|& \leq
\beta(p,q,b,\varepsilon)C_f C_\varepsilon\sum_{k=0}^\infty   \frac{(2b)^k}{\Gamma\Big(\frac{k}{p}+\frac{1}{2}\Big)} |x|^{k}\exp(B |x|^p)
\end{split}
\]
and, by the properties of the Mittag-Leffler function, we have
$$
\sum_{k=0}^\infty   \frac{(2b)^k}{\Gamma\Big(\frac{k}{p}+\frac{1}{2}\Big)} |x|^{k}\leq C'\exp(B' |x|^p).
$$
We conclude that
 there exists $B''>0$ such that
\[
\begin{split}
|U_L(x,\pp_{x_0})f(x)|& \leq
\beta(p,q,b,\varepsilon)C_f C_\varepsilon\exp(B'' |x|^p)
\end{split}
\]
 that is $U_L(x,\pp_{x_0})f(x)\in \mathcal{S\!M}^p_L$ and for $C_{f_m}\to 0$  the same estimate proves the continuity,
 i.e. $|U_L(x,\pp_{x_0})f_m(x)|\to 0$ when $f_m(x)\to 0$.
\end{proof}


\section{Function spaces of entire monogenic functions}\label{entireMON}

We recall the in the sequel we work in the real Clifford algebra $\mathbb{R}_n$,
so for the definition and the notation we refer the reader to Section \ref{entireSLI}, for more
 more details on monogenic functions see the book of \cite{BDS82}.
 We start with some definitions. The generalized Cauchy-Riemann operator in $\mathbb R^{n+1}$ is defined by:
$$
\mathcal D:= \frac{\partial}{\partial_{x_0}}+\sum_{i=1}^n e_i \frac{\partial}{\partial_{x_i}}.
$$

\begin{definition}[Left and right Monogenic Functions] Let $U\subseteq\mathbb R^{n+1}$ be an open subset.
 A function $f:\, U\to \mathbb{R}_n$, of class $C^1$, is called left monogenic if
$$
\mathcal D f= \frac{\partial}{\partial_{x_0}} f +\sum_{i=1}^n e_i \frac{\partial}{\partial_{x_i}} f=0.
$$
A function $g:\, U\to \mathbb{R}_n$, of class $C^1$, is called right monogenic if
$$
f \mathcal D = \frac{\partial}{\partial_{x_0}} f +\sum_{i=1}^n \frac{\partial}{\partial_{x_i}} f e_i=0.
$$
The set of  left monogenic functions (resp. right monogenic functions) will be denoted by $\mathcal M_L(U)$
(resp. $\mathcal M_R(U)$); if $U=\mathbb R^{n+1}$ we simply denote it by $\mathcal M_L$ (resp. $\mathcal M_R$)
\end{definition}
\begin{definition}[Fueter's homogeneous polynomials]\label{fp}
Given a multi--index $k=(k_1,...,k_n)$ where $k_i\geq 0$, we set $|k|=\sum_{i=1}^nk_i$ and $k!=\prod_{i=1}^nk_i!$.

\medskip
(I) For a multi--index $k$ with at least one negative component we set
$$
P_k(x):=0
$$
for $0=(0,...,0)$ we set
$$
P_0(x):=1.
$$

\medskip
(II)
For a multi-index $k$ with $|k|>0$ we define $P_k(x)$ as follows: for each $k$ consider the sequence of indices $j_1,j_2,\ldots ,j_{|k|}$ be given such that the first $k_1$ indices equals $1$, the next indices $k_2$ equals $2$ and, finally, the last $k_n$ equals $n$.
We define $z_i=x_ie_0-x_0e_i$ for any $i=1,\dots , n$ and $z=(z_1,\dots, z_n)$. We set
$$
z^k:=z_{j_1}z_{j_2}\ldots z_{j_{|k|}}=z_1^{k_1}z_2^{k_2}\ldots z_n^{k_n}
$$
and
$$
|z|^k=|z_1|^{k_1}\cdots |z_n|^{k_n}
$$
these products contains $z_1$ exactly $k_1$-times, $z_2$ exactly $k_2$-times and so on. We define
$$
P_k(x)=\frac{1}{|k|!}\sum_{\sigma\in perm(k)}\sigma(z^k):=\frac{1}{|k|!}\sum_{\sigma\in perm(k)} z_{j_{\sigma(1)}}z_{j_{\sigma(2)}} \ldots z_{j_{\sigma(|k|)}},
$$
where $perm(k)$ is the permutation group with $|k|$ elements. When we multiply by $k!$ the Fueter's polynomial $P_k(x)$ we will denote it by $V_k(x)$ i.e.
$$ V_k(x):= k! P_k(x)= \frac{k!}{|k|!}\sum_{\sigma\in perm(k)} z_{j_{\sigma(1)}}z_{j_{\sigma(2)}} \ldots z_{j_{\sigma(|k|)}} $$
\end{definition}

These polynomials play an important role in the monogenic function theory and we collect some of their properties in the next proposition (see Theorem $6.2$ in \cite{GHS08}):
\begin{theorem}\label{fpt}
Consider the Fueter polynomials $P_k(x)$ defined above. Then the following facts hold:.

(I) the recursion formula
$$
kP_k(x)=\sum_{i=1}^mk_i P_{k-\varepsilon_i}(x)z_i=\sum_{i=1}^mk_i z_iP_{k-\varepsilon_i}(x),
$$
and also
$$
\sum_{i=1}^mk_i P_{k-\varepsilon_i}(x)e_i=\sum_{i=1}^mk_i e_iP_{k-\varepsilon_i}(x),
$$
where $\varepsilon_i=(0,...,0,1,0,...,0)$ with $1$ in the position $i$.

(II) The derivatives $\partial_{x_j}$ for $j=1,...,n$, are given by
$$
\partial_{x_j}P_k(x)=k_jP_{k-\varepsilon_j}(x).
$$

(III) The Fueter Polynomials $P_k(x)$ are both left and right monogenic.

(IV) The following estimates holds
$$
|P_k(x)|\leq |x|^{|k|}.
$$

(V) (Binomial formula)
For all paravectors $x$ and $y$, and for the multi--index $k$, $j$ and $i$
$$
P_k(x+y)=\sum_{i+j=k}\frac{k!}{i!j!}P_i(x)P_j(y).
$$
\end{theorem}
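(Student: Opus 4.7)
The plan is to prove the five parts in the stated order, working directly from the definition $|k|!\,P_k(x)=\sum_{\sigma\in S_{|k|}}z_{j_{\sigma(1)}}\cdots z_{j_{\sigma(|k|)}}$ and invoking only the elementary identities $\partial_{x_j}z_i=\delta_{ij}$, $\partial_{x_0}z_i=-e_i$, and $\mathcal D z_i=0$. The heart of the argument is the induction in (III); (I), (II), (IV) and (V) reduce to combinatorial bookkeeping once the structure is in place.

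For (I), I would group the $|k|!$ permutations by which value $i$ occupies the last slot. Exactly $k_i$ positions $\ell$ satisfy $j_\ell=i$, and for each such $\ell$ the restricted $\sigma$ runs through all arrangements of the multi-index $k-\varepsilon_i$, contributing $(|k|-1)!\,P_{k-\varepsilon_i}$. This yields $|k|\,P_k=\sum_i k_i P_{k-\varepsilon_i}\,z_i$; the mirror grouping on the first slot gives the version with $z_i$ on the left. Subtracting the two forms and using $z_i=x_i-x_0 e_i$ cancels the $x_i$-terms and leaves $x_0\sum_i k_i\bigl(P_{k-\varepsilon_i}e_i-e_iP_{k-\varepsilon_i}\bigr)=0$, from which the $e_i$-identity follows since both sides are polynomials. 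For (II), I would differentiate the symmetrization directly using $\partial_{x_j}z_i=\delta_{ij}$; counting the positions where $z_j$ can be deleted across all $\sigma\in S_{|k|}$ gives the total $|k|!\,k_j\,P_{k-\varepsilon_j}$, whence $\partial_{x_j}P_k=k_j P_{k-\varepsilon_j}$.

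Part (III) is the main technical step. By (II), $\sum_j e_j\partial_{x_j}P_k=\sum_j k_j e_j P_{k-\varepsilon_j}$, so $\mathcal D P_k=0$ reduces to establishing $\partial_{x_0}P_k=-\sum_j k_j e_j P_{k-\varepsilon_j}$, which I would prove by induction on $|k|$. Differentiating the recursion of (I) with respect to $x_0$ and applying the inductive hypothesis on $\partial_{x_0}P_{k-\varepsilon_i}$ produces a double sum $\sum_{i,j}k_ik_j\,z_ie_j P_{k-\varepsilon_i-\varepsilon_j}$ that must be reorganized. Two observations close the argument: first, $z_ie_j-e_jz_i=-2x_0\,e_ie_j$ for $i\ne j$ and $=0$ for $i=j$, so the contribution from $i\ne j$ is antisymmetric in $(i,j)$ and vanishes upon symmetrization, allowing one to move $e_j$ to the left of $z_i$; second, $e_jz_j=z_je_j$ since $e_j^2=-1$ is scalar. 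Applying (I) to $P_{k-\varepsilon_j}$ then collapses the inner sum as $\sum_i k_iz_iP_{k-\varepsilon_i-\varepsilon_j}=(|k|-1)P_{k-\varepsilon_j}+z_jP_{k-2\varepsilon_j}$, the stray $z_jP_{k-2\varepsilon_j}$ terms cancel against the $i=j$ contributions, and one recovers the target identity. The main obstacle is organizing this cascade of cancellations cleanly; once the symmetry observations are in hand, the bookkeeping is mechanical. Right monogenicity follows analogously from the mirror form of (I).

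For (IV), each $z_i$ is a paravector with $|z_i|^2=x_0^2+x_i^2\le|x|^2$, and left multiplication by a paravector acts on $\mathbb{R}_n$ as a Euclidean operator of norm at most its magnitude; iterating along the product gives $|z_{j_{\sigma(1)}}\cdots z_{j_{\sigma(|k|)}}|\le|x|^{|k|}$, and summing the $|k|!$ such terms and dividing yields $|P_k(x)|\le|x|^{|k|}$. For (V), I would expand $z_\ell(x+y)=z_\ell(x)+z_\ell(y)$ in every factor and regroup the $2^{|k|}$ resulting terms by the multi-index $i$ recording how many factors come from the $x$-side (with $j=k-i$ from the $y$-side). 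The averaging over $S_{|k|}$ reconciles the non-commuting $z_\ell(x)$ and $z_\ell(y)$ into symmetrized blocks $P_i(x)\,P_j(y)$, and the multinomial coefficient $k!/(i!\,j!)$ emerges from the count of interleaved arrangements; the point I would verify carefully is that the symmetrization really does wash out the $x$/$y$ ordering ambiguity, which is the Clifford analogue of the observation that $z_a(x)z_b(y)+z_b(y)z_a(x)$ has no bivector part.
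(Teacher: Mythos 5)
The paper does not prove this theorem; it is imported verbatim from Theorem 6.2 of the reference \cite{GHS08}, so there is no internal argument to compare against. Judged on its own terms, your sketch is on the right track for (I)--(IV): the slot-grouping for (I), the $\delta_{ij}$-count for (II), the induction with the antisymmetry $z_ie_j-e_jz_i=-2x_0e_ie_j$ (for $i\neq j$) for (III), and the fact that left multiplication by a paravector is a Euclidean isometry for (IV) are all correct ingredients, modulo small bookkeeping imprecisions you already flag (e.g.\ the coefficient in the double sum in (III) is $k_i(k_j-\delta_{ij})$, not $k_ik_j$, which is exactly what makes your ``stray $i=j$'' cancellation necessary).

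Part (V), however, has a genuine gap as sketched, and the place you said you would ``verify carefully'' is precisely where the argument breaks. After expanding $z_\ell(x+y)=z_\ell(x)+z_\ell(y)$ and symmetrizing, the block you obtain for a fixed decomposition $i+j=k$ is the symmetrized product
\[
\mathrm{sym}\bigl(z_1(x)^{\times i_1},\dots,z_n(x)^{\times i_n},\,z_1(y)^{\times j_1},\dots,z_n(y)^{\times j_n}\bigr),
\]
and this is \emph{not} equal to $P_i(x)P_j(y)$. Already for $k=(1,1,0,\dots)$, $i=(1,0,\dots)$, $j=(0,1,0,\dots)$ one has
\[
\mathrm{sym}\bigl(z_1(x),z_2(y)\bigr)-P_i(x)P_j(y)=\tfrac12\bigl(z_2(y)z_1(x)-z_1(x)z_2(y)\bigr)=-x_0y_0\,e_1e_2\neq 0.
\]
The binomial identity is only recovered after summing over all pairs $(i,j)$ with $i+j=k$: the defects pair up and cancel (in the example, against $\mathrm{sym}(z_1(y),z_2(x))-P_{(0,1,\dots)}(x)P_{(1,0,\dots)}(y)=+x_0y_0e_1e_2$), and your ``no bivector part'' remark is exactly this cancellation for $|k|=2$, but it does not obviously propagate to higher $|k|$ where the interleavings produce nested commutators. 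The clean route to (V) is to sidestep the combinatorics entirely: by translation invariance of $\mathcal D$ and part (III), $x\mapsto P_k(x+y)$ is an entire left monogenic polynomial, so it equals its Taylor series $\sum_i V_i(x)\,\frac{1}{i!}\partial^i_xP_k(x+y)\big|_{x=0}$; by part (II) one has $\partial^i P_k=\frac{k!}{(k-i)!}P_{k-i}$, and substituting $V_i=i!\,P_i$ gives precisely $\sum_{i+j=k}\frac{k!}{i!\,j!}P_i(x)P_j(y)$. I would recommend replacing your combinatorial sketch of (V) with this two-line deduction from (II) and (III).
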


We introduce the Cauchy kernel function.

\begin{definition}
The Cauchy kernel $\mathcal G (x)$ is defined by
$$
\mathcal G (x)= \frac{1}{\sigma_n}\frac{\overline{x}}{|x|^{n+1}},\ \ x\in\mathbb{R}^{n+1} \setminus \{0\},\ \
\ \ \sigma_n:=2\frac{2\pi^{(n+1)/2}}{\Gamma((n+1)/2)}.
$$
Moreover, we define for any multi--index $k=(k_1,\dots, k_n)$
$$
\mathcal G_k (x)= \frac {\partial^{|k|}}{\partial x^k}\mathcal G (x).
$$
\end{definition}

Monogenic functions satisfy a generalized integral Cauchy formula (see Theorem $7.12$ in \cite{GHS08}).
\begin{theorem}[The Cauchy formula]\label{t3}
Let $U$ be a bounded domain in $\mathbb{R}^{n+1}$  with smooth  boundary $\partial G$
so that the normal unit vector is orientated outwards.
For the left monogenic functions $f$, defined on an open set that contains $\overline{U}$, we have
$$
f(x)=\int_{\partial G} \mathcal G (y-x)Dy f(y),\ \ \ x\in G.
$$
where
$$
Dy=\sum_{j=0}^n (-1)^j e_j \, dy_0\wedge \dots \wedge dy_{j-1}\wedge dy_{j+1}\wedge\dots\wedge dy_n.
$$
Moreover, for any $x\in G$ and for any multi--index $k=(k_1,\dots, k_n)$, we also have
$$ \frac {\partial^{|k|}}{\partial x^k} f(x)= (-1)^{|k|}\int_{\partial G}\mathcal G_k(y-x)Dy f(y).
 $$
\end{theorem}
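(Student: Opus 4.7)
The plan is to prove Theorem \ref{t3} by means of a Clifford-algebra valued Stokes argument, exactly as one proves the Cauchy integral formula of one complex variable. The key algebraic fact underlying everything is that for a right monogenic function $g$ and a left monogenic function $f$, the $\mathbb{R}_n$-valued $n$-form $g \, Dy \, f$ is closed, i.e.
\[
d\bigl(g(y)\, Dy\, f(y)\bigr) = \bigl((g\mathcal{D})(y)\bigr) f(y)\, dy_0\wedge\cdots\wedge dy_n + g(y)\bigl((\mathcal{D}f)(y)\bigr)\, dy_0\wedge\cdots\wedge dy_n.
\]
This identity is a direct computation from the definition of $Dy$, using the anti-commutation relations $e_i e_j + e_j e_i = -2\delta_{ij}$; the only subtlety is that because the Clifford algebra is non-commutative one has to keep $g$ on the left and $f$ on the right throughout.

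First I would verify that $y\mapsto \mathcal{G}(y-x)$ is right monogenic on $\mathbb{R}^{n+1}\setminus\{x\}$, which is a direct computation from $\mathcal{G}(y)=\frac{1}{\sigma_n}\overline{y}/|y|^{n+1}$: one differentiates and uses $\overline y \mathcal{D}=0$ where it applies, together with the harmonicity of $|y|^{-(n-1)}$ (up to constants) to cancel terms. Next, fix $x\in G$, pick $\varepsilon>0$ small enough that $\overline{B_\varepsilon(x)}\subset G$, and apply Stokes to the closed form $\mathcal{G}(y-x)\, Dy\, f(y)$ on the domain $G\setminus B_\varepsilon(x)$. Because the form is closed there, the integral over the outer boundary $\partial G$ equals the integral over the inner sphere $\partial B_\varepsilon(x)$ (with appropriate orientation).

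The third step is the computation of the limit as $\varepsilon\to 0$ of
\[
\int_{\partial B_\varepsilon(x)} \mathcal{G}(y-x)\, Dy\, f(y).
\]
On $\partial B_\varepsilon(x)$ one has $Dy = \nu(y)\,d\sigma(y)$ where $\nu$ is the outward unit normal, and $\mathcal{G}(y-x)\nu(y) = \frac{1}{\sigma_n \varepsilon^n}$ (the sign of the real-part coefficient in $\overline{y-x}\cdot(y-x)/|y-x|$ gives a positive scalar). Continuity of $f$ at $x$ then yields the limit $f(x)$, and the total area of the sphere cancels the factor $\sigma_n$, as it should.

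Finally, for the derivative formula I would differentiate under the integral sign. Since $f$ is fixed and only $\mathcal{G}(y-x)$ depends on $x$, one only needs
\[
\frac{\partial^{|k|}}{\partial x^k}\mathcal{G}(y-x) = (-1)^{|k|} (\partial^{|k|}_y \mathcal{G})(y-x) = (-1)^{|k|}\mathcal{G}_k(y-x),
\]
the sign coming from the chain rule in $y-x$. Smooth dependence on $x\in G$ (for $y\in \partial G$, which is compact and disjoint from $x$) justifies the interchange of differentiation and integration. The main technical hurdle is really just the first step, the Clifford-algebra Stokes identity, since Clifford non-commutativity forces one to be careful about the placement of factors; everything else is a standard Cauchy-kernel singularity extraction.
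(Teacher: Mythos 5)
The paper does not prove Theorem \ref{t3}; it is quoted as a known result with a citation to Theorem 7.12 of \cite{GHS08}. Your argument is the standard Clifford--Stokes proof found in that reference and in \cite{BDS82}: the closedness of $g\,Dy\,f$ when $g$ is right and $f$ is left monogenic, excision of a small ball around $x$, the computation $\mathcal G(y-x)\nu(y)=\tfrac{1}{\sigma_n\varepsilon^n}$ on the inner sphere via $\overline{(y-x)}(y-x)=|y-x|^2$, and differentiation under the integral sign. All steps are correct, so this is a faithful reconstruction of the omitted proof.
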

If $f$ is left monogenic in a ball centered at the origin and of radius $R$ then for any $|x|<r$ with $0<r<R$ we have
$$
f(x) = \sum_{|k|=0}^{+\infty} V_k(x)a_k,
$$
where the $a_k$'s are Clifford numbers defined by
$$ a_k:=\frac{(-1)^{|k|}}{k!}\int_{|y|=r}\mathcal G_k(y) Dy f(y). $$
By the estimate
\begin{equation}\label{e1}
| \mathcal G_k(x) |\leq \frac{n(n+1)\cdots (n+|k|-1)}{|x|^{n+|k|}}
\end{equation}
the following sharp estimate holds to be true (see \cite{CK02})
\begin{equation}
|a_k|\leq M_g(r) \frac{c(n,k)}{r^{|k|}}.
\end{equation}
where
\begin{equation}\label{cnk}
c(n,k):=\frac{n(n+1)\cdots (n+|k|-1)}{k!}=\frac{(n+|k|-1)!}{(n-1)!\, k!}.
\end{equation}
The first important property that concerns the number $c(n,k)$ is contained in the following lemma (see Lemma $1$ in \cite{CDK07bis}).
\begin{lemma}\label{l4}
For all multi--indexes $k\in(\mathbb N_0)^n\setminus\{ 0\}$ and for all positive integers $n$ we have
$$
 \limsup_{p\to +\infty}\left(\sum_{|k|=p} c(n,k)\right)^{\frac 1p}=n.
$$
\end{lemma}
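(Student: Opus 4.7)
The plan is to compute the inner sum $\sum_{|k|=p} c(n,k)$ in closed form and then extract the asymptotic behaviour of its $p$-th root. Since $|k|=p$ is fixed throughout the sum, the factor $(n+|k|-1)!/(n-1)! = (n+p-1)!/(n-1)!$ can be pulled outside, giving
\[
\sum_{|k|=p} c(n,k) = \frac{(n+p-1)!}{(n-1)!} \sum_{|k|=p} \frac{1}{k!}.
\]
The remaining sum is a classical multinomial quantity: the identity $(1+1+\cdots+1)^p = n^p$ expanded via the multinomial theorem yields $\sum_{|k|=p} \frac{p!}{k!} = n^p$, so $\sum_{|k|=p} \frac{1}{k!} = \frac{n^p}{p!}$. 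This is the key algebraic step and I expect it to be the only place where something slightly nontrivial happens.

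Combining the two gives the compact formula
\[
\sum_{|k|=p} c(n,k) = \frac{(n+p-1)!}{(n-1)!\, p!}\, n^p = \binom{n+p-1}{p}\, n^p.
\]
Taking $p$-th roots, we obtain
\[
\left(\sum_{|k|=p} c(n,k)\right)^{1/p} = n \cdot \binom{n+p-1}{p}^{1/p}.
\]

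The final step is to observe that $\binom{n+p-1}{p}$ is a polynomial in $p$ of degree $n-1$ (for fixed $n$), precisely $\binom{n+p-1}{p} = \frac{(p+1)(p+2)\cdots(p+n-1)}{(n-1)!}$. Hence $\binom{n+p-1}{p}^{1/p} \to 1$ as $p \to \infty$, since for any polynomial $Q(p)$ of positive degree with positive leading coefficient one has $Q(p)^{1/p} \to 1$ (e.g.\ by taking logarithms: $\frac{\log Q(p)}{p} \sim \frac{(n-1)\log p}{p} \to 0$). Therefore the $\limsup$ (which is an ordinary limit here) equals $n$, proving the lemma. No real obstacle is anticipated; the computation is clean once the multinomial identity $\sum_{|k|=p} p!/k! = n^p$ is invoked.
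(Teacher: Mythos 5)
The paper itself does not prove this lemma; it merely cites Lemma~1 of Constales--De Almeida--Krausshar (\cite{CDK07bis}), so there is no internal proof to compare against. Your proposal is a correct, self-contained proof. Pulling out the $|k|$-dependent factor, invoking the multinomial identity $\sum_{|k|=p} p!/k! = n^p$, and arriving at the closed form $\sum_{|k|=p} c(n,k) = \binom{n+p-1}{p}\,n^p$ is exactly the right mechanism; the remaining factor $\binom{n+p-1}{p}$ is a degree-$(n-1)$ polynomial in $p$, whose $p$-th root tends to $1$, so the $\limsup$ is in fact a genuine limit and equals $n$. The only thing I would flag is cosmetic: you could record explicitly that the closed form shows the $\limsup$ in the statement is an honest $\lim$, a slightly stronger assertion that the cited reference also delivers.
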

\begin{definition}
Let $f$ be an entire left monogenic function. Then we say that $f$ is of finite order if there exists $\kappa>0$ such that
$$
M_f(r)< e^{r^k}
$$
for sufficiently large $r$. The greatest lower bound $\rho$ of such numbers $\kappa$ is called order of $f$.
Equivalently, we can define the order as
$$
\rho=\limsup_{r\to\infty}\frac{\ln\ln M_f(r)}{\ln r}.
$$
\end{definition}

\begin{definition}
Let $f$  be an entire left monogenic function of order $\rho$ and let $A>0$ be such that for sufficiently large values of $r$ we have
$$
M_f(r)< e^{Ar^\rho}.
$$
We say that $f$ of order $\rho$ is of type $\sigma$ if $\sigma$ is the greatest lower bound of such numbers and we have
$$
\sigma=\limsup_{r\to\infty}\frac{\ln M_f(r)}{ r^\rho}.
$$
 Moreover, we have
\begin{itemize}
\item
When $\sigma=0$ we say that $f$ is of minimal type.
\item
When $\sigma=\infty$ we say that $f$ is of maximal type.
\item
When $\sigma \in (0,\infty)$ we say that $f$ is of normal type.
\end{itemize}
\end{definition}
The constant functions are said to be of minimal type of order zero. The next two theorems are the generalizations of the Lindel\"of and Pringsheim theorems on the growth of the Taylor coefficients of an entire holomorphic function to the case of an entire monogenic function (see Theorem $1$ in \cite{CDK07bis} and Theorem $1$ in \cite{CDK07}).

\begin{theorem}\label{t1}
For an entire monogenic function $f:\mathbb R^{n+1} \to \mathbb{R}_n$ with a Taylor series representation of the form
$f(x)=\sum_{|k|=0}^{+\infty} V_k(x)a_k$ set
$$
\Pi=\limsup_{|k|\to +\infty}\frac {|k| \log |k|}{-\log|\frac {1}{c(n,k)}a_k|},
$$
then $\rho(f)=\Pi$ where $c(n,k)$ is the number defined in \eqref{cnk}.
\end{theorem}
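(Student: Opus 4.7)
The plan is to mimic the classical Lindel\"of--Pringsheim argument and establish the two inequalities $\rho(f)\le\Pi$ and $\Pi\le\rho(f)$ separately. The key observation is that the natural monogenic analogue of the complex Taylor coefficient is the renormalized quantity $a_k/c(n,k)$; this is dictated both by the sharp Cauchy-type estimate $|a_k|\le M_f(r)\,c(n,k)/r^{|k|}$ recorded after Theorem \ref{t3} and by Lemma \ref{l4}, which identifies $n$ (not any factorial-growing constant) as the effective base of the multi-index counting.

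For the direction $\Pi\le\rho(f)$, I would fix $\varepsilon>0$. By the definition of $\rho=\rho(f)$ we have $M_f(r)<\exp(r^{\rho+\varepsilon})$ for all $r$ sufficiently large, so the Cauchy coefficient estimate immediately yields
$$
\left|\frac{a_k}{c(n,k)}\right|\le\frac{\exp(r^{\rho+\varepsilon})}{r^{|k|}}\qquad\text{for every admissible }r.
$$
Minimizing the right-hand side in $r$ by taking $r=(|k|/(\rho+\varepsilon))^{1/(\rho+\varepsilon)}$, a routine logarithmic computation exactly as in the classical complex case gives
$$
-\log\!\left|\frac{a_k}{c(n,k)}\right|\ge\frac{|k|}{\rho+\varepsilon}\log|k|\,(1+o(1)),
$$
whence $\Pi\le\rho+\varepsilon$. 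Letting $\varepsilon\to 0$ closes this half.

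For the reverse direction $\rho(f)\le\Pi$, fix $\varepsilon>0$. The definition of $\Pi$ gives $|a_k/c(n,k)|\le|k|^{-|k|/(\Pi+\varepsilon)}$ for $|k|$ large. Substituting into the Taylor expansion $f(x)=\sum_k V_k(x)a_k$, using $|V_k(x)|\le k!\,|x|^{|k|}$ from Theorem \ref{fpt}(IV) together with the identity $k!\,c(n,k)=(n+|k|-1)!/(n-1)!$, and grouping by $|k|=p$, I arrive at
$$
M_f(r)\le\mathrm{const}+\sum_{p\ge p_0}r^p\,p^{-p/(\Pi+\varepsilon)}\sum_{|k|=p}k!\,c(n,k),
$$
where the inner sum equals $\binom{n+p-1}{n-1}(n+p-1)!/(n-1)!$ and, by Stirling, is of order $p^{C_n}(p/e)^p$ for a constant $C_n=C_n(n)$. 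A standard Laplace-type analysis of the resulting power series (the dominant index is $p\asymp r^{\Pi+\varepsilon}$) then produces a bound of the form $M_f(r)\le\exp(A\,r^{\Pi+\varepsilon})$ for some $A>0$, giving $\rho(f)\le\Pi+\varepsilon$, and $\varepsilon\to 0$ finishes the proof.

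The hard part is precisely the Stirling bookkeeping in this second direction. The factorial growth coming from $V_k$, combined with the multiplicity $\binom{n+p-1}{n-1}\sim p^{n-1}$ of multi-indices at level $p$ and the combinatorial factor $(n+p-1)!/(n-1)!$, creates a near-cancellation with the decay $p^{-p/(\Pi+\varepsilon)}$; one has to verify that what remains is exactly an exponential of order $\Pi+\varepsilon$, not anything larger. Lemma \ref{l4} is the structural signal that this works: the correct base of exponential growth associated with the Fueter polynomial basis is $n$, not a factorial, and the polynomial corrections coming from Stirling are harmless. The degenerate case $\Pi=0$ (order zero) requires a minor separate argument but follows the same template with the optimum $r$ pushed to infinity.
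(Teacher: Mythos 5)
Note first that the paper does not prove Theorem~\ref{t1}: it is imported from Constales--De Almeida--Krausshar (Theorem~1 in \cite{CDK07bis} and \cite{CDK07}), so there is no in-paper proof to compare against; I therefore assess your argument on its own terms. Your inequality $\Pi\le\rho(f)$ is correct: the Cauchy-type coefficient estimate $|a_k/c(n,k)|\le M_f(r)\,r^{-|k|}$ together with the standard minimization over $r$ goes through exactly as in the classical Lindel\"of argument.

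The gap is in the converse inequality $\rho(f)\le\Pi$. You use $|V_k(x)|\le k!\,|x|^{|k|}$ (which is what Theorem~\ref{fpt}(IV) together with $V_k=k!\,P_k$ gives), combine it with $|a_k|\le c(n,k)\,|k|^{-|k|/(\Pi+\varepsilon)}$, and sum over $|k|=p$ using $\sum_{|k|=p}k!\,c(n,k)=\binom{n+p-1}{n-1}\frac{(n+p-1)!}{(n-1)!}\sim p^{C_n}(p/e)^p$. The $p$-th term of your majorant is then
$$
r^p\,p^{-p/(\Pi+\varepsilon)}\,p^{C_n}(p/e)^p \;=\; p^{C_n}\Bigl(\frac{r}{e}\Bigr)^{p}\,p^{\,p\left(1-\frac{1}{\Pi+\varepsilon}\right)}.
$$
As soon as $\Pi+\varepsilon>1$ the exponent $p\bigl(1-\frac{1}{\Pi+\varepsilon}\bigr)$ is positive, so this term grows super-exponentially and the series diverges for \emph{every} $r>0$; there is no stationary index $p\asymp r^{\Pi+\varepsilon}$, and no bound $M_f(r)\le\exp(Ar^{\Pi+\varepsilon})$ is produced. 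The near-cancellation you invoke does not occur: $p^{-p/(\Pi+\varepsilon)}$ is of size $(p!)^{-1/(\Pi+\varepsilon)}$, which cannot absorb the extra $p!$ hidden in $k!\,c(n,k)=(n+|k|-1)!/(n-1)!$. Lemma~\ref{l4} governs $\sum_{|k|=p}c(n,k)$, which is of geometric size $\sim n^p$ up to polynomials, not the factorially large $\sum_{|k|=p}k!\,c(n,k)$; the factorial introduced by your bound on $|V_k(x)|$ is exactly the difference. A correct argument must avoid this factor entirely, for instance by estimating the homogeneous parts $f_p(x)=\sum_{|k|=p}V_k(x)a_k$ directly via a Cauchy inequality, or by working in a normalization where $|V_k(x)|\le|x|^{|k|}$ (as the proof of Lemma~\ref{l2} tacitly assumes), so that it is the geometric factor from Lemma~\ref{l4}, not a factorial, that multiplies the coefficient decay. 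As written, your second direction fails whenever $\Pi\ge1$, which is essentially every case of interest.
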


\begin{theorem}\label{t2}
For an entire monogenic function $f:\mathbb R^{n+1} \to \mathbb{R}_n$ with a Taylor series representation of the form
$f(x)=\sum_{|k|=0}^{+\infty} V_k(x)a_k$ with order $\rho$ ($0<\rho<+\infty$) and set
$$\Pi=\limsup_{|k|\to +\infty} |k|\left( |a_k| \right)^{\frac \rho{|k|}}, $$
then
$$
\displaystyle{\sigma(f)=\frac \Pi{e\rho (f)}}.
$$
\end{theorem}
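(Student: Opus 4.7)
I would adapt the classical Lindelöf--Pringsheim proof of the type formula for entire holomorphic functions, substituting Taylor monomials by the Fueter basis $V_k$ and the scalar Cauchy estimate by its monogenic counterpart. The proof splits into the two complementary inequalities $\Pi \leq e\rho\sigma$ and $e\rho\sigma \leq \Pi$, each yielding one half of the stated identity.

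For the direction $\Pi \leq e\rho\sigma$, fix $\epsilon > 0$; since $f$ has type $\sigma$ of order $\rho$, $M_f(r) \leq e^{(\sigma + \epsilon)r^\rho}$ for $r$ sufficiently large. Inserting this in the monogenic Cauchy estimate
\[
|a_k| \leq M_f(r)\, \frac{c(n,k)}{r^{|k|}},
\]
which follows from the integral representation in Theorem \ref{t3} together with the pointwise bound \eqref{e1}, and minimizing the right-hand side in $r$ at the optimal value $r^\rho = |k|/(\rho(\sigma + \epsilon))$, produces
\[
|a_k| \leq c(n,k)\left(\frac{e\rho(\sigma + \epsilon)}{|k|}\right)^{|k|/\rho}.
\]
Rearranging yields $|k|\,|a_k|^{\rho/|k|} \leq e\rho(\sigma+\epsilon)\, c(n,k)^{\rho/|k|}$. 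Passing to the $\limsup$ as $|k|\to\infty$ and using Lemma \ref{l4} to control the factor $c(n,k)^{\rho/|k|}$ gives $\Pi \leq e\rho(\sigma+\epsilon)$; letting $\epsilon \to 0$ concludes the argument.

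For the converse $e\rho\sigma \leq \Pi$, assume $\Pi < \infty$ and fix $\epsilon > 0$; by definition of the $\limsup$, $|a_k| \leq ((\Pi+\epsilon)/|k|)^{|k|/\rho}$ for all but finitely many multi-indices. Estimate
\[
|f(x)| \leq \sum_{|k|} |V_k(x)|\, |a_k| \leq \sum_{p \geq 0} |x|^p \sum_{|k| = p} k!\, |a_k|,
\]
using $|V_k(x)| \leq k!\,|x|^{|k|}$ from Theorem \ref{fpt}(IV). Control the inner sum over $|k|=p$ via Lemma \ref{l4} and the multinomial count of multi-indices of length $p$, then locate the maximizing index $p_*$ of the outer series by elementary calculus: $p_* \sim (\Pi+\epsilon) r^\rho / e$, where the general term attains order $e^{(\Pi+\epsilon)r^\rho/(e\rho)}$. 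Summing and absorbing subexponential factors yields $M_f(r) \leq C \exp\!\left(((\Pi+2\epsilon)/(e\rho))\, r^\rho\right)$, so that $\sigma \leq \Pi/(e\rho)$ after letting $\epsilon \to 0$.

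The principal obstacle lies in the second direction: the complex scalar prototype relies on the trivial identity $|z^k| = |z|^k$, whereas the Fueter polynomials carry the substantial combinatorial weight $k!$, so that the naive bound $|V_k(x)| \leq k!|x|^{|k|}$ alone is too rough. Recovering the clean constant $1/(e\rho)$ in the final formula requires carefully combining this with the sharp asymptotics $\limsup_p (\sum_{|k|=p} c(n,k))^{1/p} = n$ supplied by Lemma \ref{l4} and with the precise enumeration of multi-indices of fixed length $p$, much as $c(n,k)$ plays its characteristic role in the parallel order formula of Theorem \ref{t1}.
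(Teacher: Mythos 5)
The paper does not actually prove Theorem \ref{t2}; it imports it verbatim from Constales--De Almeida--Krausshar (see the sentence immediately preceding Theorem \ref{t1}, which points to Theorem~$1$ of \cite{CDK07}). So there is no in-paper argument to compare against, and your attempt has to stand on its own. As written it has substantive gaps in \emph{both} directions, not only in the second one you flag.

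In the direction $\Pi \leq e\rho\sigma$ you obtain $|k|\,|a_k|^{\rho/|k|} \leq e\rho(\sigma+\epsilon)\,c(n,k)^{\rho/|k|}$ and then claim Lemma \ref{l4} lets you pass to the $\limsup$ and discard the factor $c(n,k)^{\rho/|k|}$. But Lemma \ref{l4} says $\limsup_p\bigl(\sum_{|k|=p}c(n,k)\bigr)^{1/p}=n$, and the individual quantity $c(n,k)^{1/|k|}$ genuinely attains values near $n$: along the balanced multi-indices $k=(p/n,\dots,p/n)$ one has, by Stirling, $c(n,k)\sim n^{p}$ up to polynomial factors (this is precisely the maximizing case singled out in the proof of Lemma \ref{l2} via Lemma \ref{p1}). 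Hence $\limsup_{|k|\to\infty} c(n,k)^{\rho/|k|} = n^{\rho}$, and your argument only yields $\Pi \leq e\rho\sigma\, n^{\rho}$, which is off by the factor $n^{\rho}>1$ whenever $n\geq 2$. The termwise Cauchy estimate $|a_k|\leq c(n,k)M_f(r)/r^{|k|}$ is simply not sharp enough, coefficient by coefficient, to give the clean inequality; the proof in \cite{CDK07} must exploit a finer bound on the $|k|=p$ block than this.

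In the direction $e\rho\sigma \leq \Pi$ you correctly identify the obstacle but do not resolve it, and in fact the bound you propose cannot close the gap. With the paper's normalization $V_k = k!\,P_k$ and $|P_k(x)|\leq|x|^{|k|}$, you get $|V_k(x)|\leq k!\,|x|^{|k|}$ and your estimate reads
\[
M_f(r)\ \lesssim\ \sum_{p}\ r^{p}\Bigl(\tfrac{\Pi+\epsilon}{p}\Bigr)^{p/\rho}\sum_{|k|=p}k!\,.
\]
But $\sum_{|k|=p}k!\geq p!$ (the term $k=(p,0,\dots,0)$ alone), which by Stirling is of order $(p/e)^{p}$, a \emph{superexponential} factor, not a ``subexponential'' one; the resulting general term is $\sim r^{p}p^{p(1-1/\rho)}e^{-p}(\Pi+\epsilon)^{p/\rho}$, which diverges for every $r>0$ as soon as $\rho>1$. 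Neither Lemma \ref{l4} (which concerns $c(n,k)$, not $k!$) nor a multinomial count of multi-indices can cancel a factor of size $p!$. What actually happens in the literature is that the Cauchy estimate forces $|a_k|$ to be much smaller than $((\Pi+\epsilon)/|k|)^{|k|/\rho}$ precisely at the unbalanced multi-indices where $k!$ is large (the factor $c(n,k)=\frac{(n+|k|-1)!}{(n-1)!\,k!}$ encodes this), so the two weights $k!$ and $1/k!$ must be paired \emph{before} taking the supremum over $|k|=p$; using the crude individual bounds $|V_k|\leq k!|x|^{|k|}$ and $|a_k|\leq\max_{|j|=p}|a_j|$ independently loses this cancellation irretrievably. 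To repair the argument you would need to carry out the summation with the product $|V_k(x)|\,|a_k|$ estimated jointly (as is done, for instance, in the chain of inequalities in the proof of Theorem \ref{t4}, where the combination $c(n,m+k)/(c(n,m)c(n,k))$ is controlled explicitly), rather than splitting it into two separate worst-case bounds.
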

The next lemma will be useful in the proof of Lemma \ref{l2} and it is a generalization of Lemma \ref{estigam} (III).
\begin{lemma}\label{p1}
Given a multi--index $k\in (\mathbb N_0)^n$, then for any $q\geq 1$ we have
$$
 \Gamma \left( \frac{|k|}{nq} +1\right)^{nq}\leq k!.
$$
\end{lemma}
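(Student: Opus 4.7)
The plan is to combine two ingredients: the componentwise version of estimate (III) in Lemma \ref{estigam} applied to each $k_i$, and the log-convexity of $\Gamma$ applied through Jensen's inequality to merge the components back into a single Gamma value of $|k|/(nq)+1$.

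First I would observe that Lemma \ref{estigam}(III), when applied to each nonnegative integer $k_i$, yields
$$\Gamma\!\left(\frac{k_i}{q}+1\right)^{q}\leq k_i!, \qquad i=1,\ldots,n,$$
and multiplying these $n$ inequalities gives
$$\prod_{i=1}^{n}\Gamma\!\left(\frac{k_i}{q}+1\right)^{q}\leq \prod_{i=1}^{n}k_i!=k!.$$
(When $k_i=0$ this reduces to $\Gamma(1)^q=1$, so the zero components cause no trouble.) Thus it suffices to prove
$$\Gamma\!\left(\frac{|k|}{nq}+1\right)^{nq}\leq \prod_{i=1}^{n}\Gamma\!\left(\frac{k_i}{q}+1\right)^{q},$$
or equivalently, after dividing the exponent by $q$,
$$\Gamma\!\left(\frac{|k|}{nq}+1\right)^{n}\leq \prod_{i=1}^{n}\Gamma\!\left(\frac{k_i}{q}+1\right).$$

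The second step is to invoke the log-convexity of the Gamma function on $(0,\infty)$ (Bohr--Mollerup). Taking the $n$ points $x_i:=k_i/q+1>0$ with equal weights $1/n$, Jensen's inequality for the convex function $\log\Gamma$ gives
$$\log\Gamma\!\left(\frac{1}{n}\sum_{i=1}^{n}x_i\right)\leq \frac{1}{n}\sum_{i=1}^{n}\log\Gamma(x_i).$$
Since $\tfrac{1}{n}\sum_{i=1}^{n}x_i=\tfrac{|k|}{nq}+1$, exponentiating and raising to the $n$-th power delivers exactly the displayed inequality.

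Combining the two steps gives $\Gamma\!\left(\tfrac{|k|}{nq}+1\right)^{nq}\leq k!$, as required. The main thing to get right is the bookkeeping that the arithmetic mean of $k_i/q+1$ produces the argument $|k|/(nq)+1$ that appears in the statement; beyond that, the only non-trivial input is the classical log-convexity of $\Gamma$, which holds on the whole half-line where all our arguments lie, so no boundary issues arise.
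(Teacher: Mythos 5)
Your proof is correct and amounts to essentially the same argument as the paper's: both reduce the claim to the inequality $\Gamma\bigl(\tfrac{|k|}{nq}+1\bigr)^{n}\leq \prod_{i=1}^{n}\Gamma\bigl(\tfrac{k_i}{q}+1\bigr)$ followed by the componentwise application of Lemma \ref{estigam}(III). The only difference is presentational: the paper derives that merging inequality directly by applying H\"older's inequality to the integral representation $\Gamma\bigl(\tfrac{|k|}{nq}+1\bigr)=\int_0^{\infty}\prod_{i=1}^{n}e^{-t/n}t^{k_i/(nq)}\,dt$, whereas you invoke the log-convexity of $\Gamma$ together with Jensen --- which is the same inequality, just cited as a known fact rather than re-proved on the spot.
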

\begin{proof}
It is sufficient to observe that
\[
\begin{split}
\Gamma \left(\frac{|k|}{q}+1 \right)=\int_0^{+\infty} e^{-t} t^{\frac{|k|}{nq}}\, dt=\int_{0}^{+\infty}\Pi_{i=1}^n e^{\frac tn}t^{\frac{ k_i}{nq}}\,dt & \overset{\textrm{H\"older inequality}}{\leq} \Pi_{i=1}^n \Gamma\left(\frac{k_i}{q}+1\right)^{\frac 1n}\\
& \overset{\textrm{Lemma \ref{estigam}}}{\leq}(k!)^{\frac 1{nq}}.
\end{split}
\]
So we get the statement.
\end{proof}
In the next lemma we introduce the equivalent of the entire Mittag--Leffler functions in one complex variable in the context of the Clifford Analysis. These functions are defined following the way presented at p. 159 in \cite{CDK07} and at p. 772 in \cite{CDK07bis}.
\begin{lemma}\label{l2} Let $x\in \mathbb{R}^{n+1}$ then for any $\alpha,\, \beta\in\mathbb R_{>0}$ the
Mittag-Leffler function
$$
E_{\alpha,\beta}(x)=\sum_{|k|=0}^\infty\frac{c(n, k) V_k(x)}{\Gamma(\alpha |k|+\beta)}
$$
is an entire monogenic function of order $\frac{1}{\alpha}$  and of type $n^{\frac 1\alpha}$ where $c(n,k)$ is the number defined in \eqref{cnk}.
\end{lemma}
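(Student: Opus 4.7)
The plan is to first verify that the series defines an entire monogenic function, and then to read off the order from Theorem \ref{t1} and the type from Theorem \ref{t2}, both applied to the Taylor coefficients $a_k = c(n,k)/\Gamma(\alpha|k|+\beta)$.

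For entireness and monogenicity, each Fueter polynomial $V_k$ is monogenic by Theorem \ref{fpt}(III), so it suffices to establish absolute, locally uniform convergence of the series. Stirling's formula gives $\Gamma(\alpha|k|+\beta)^{1/|k|}\sim (\alpha|k|/e)^{\alpha}\to\infty$, while Lemma \ref{l4} together with the trivial bound $c(n,k)\leq\sum_{|k'|=|k|}c(n,k')$ yields $\limsup_{|k|\to\infty} c(n,k)^{1/|k|}\leq n$. Hence $|a_k|^{1/|k|}\to 0$, and combined with the bound $|V_k(x)|\leq k!\,|x|^{|k|}$ from Theorem \ref{fpt}(IV) and the standard multinomial count of multi-indices with $|k|=p$, this gives convergence on every bounded set; the sum is therefore entire monogenic.

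For the order, Theorem \ref{t1} gives
\[
\rho \;=\; \limsup_{|k|\to\infty}\frac{|k|\log|k|}{-\log|a_k/c(n,k)|} \;=\; \limsup_{|k|\to\infty}\frac{|k|\log|k|}{\log\Gamma(\alpha|k|+\beta)},
\]
and Stirling's expansion $\log\Gamma(\alpha|k|+\beta) = \alpha|k|\log|k| + O(|k|)$ makes the quotient tend to $1/\alpha$, so $\rho=1/\alpha$.

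The type is the main (though mild) obstacle, since in Theorem \ref{t2} the quantity $\Pi=\limsup_{|k|\to\infty}|k|(|a_k|)^{\rho/|k|}$ involves $c(n,k)^{1/|k|}$, which depends on the multi-index $k$ itself, whereas Lemma \ref{l4} only controls the sum over $|k|=p$. I would factor
\[
|k|\,(|a_k|)^{\rho/|k|} \;=\; \frac{|k|}{\Gamma(\alpha|k|+\beta)^{1/(\alpha|k|)}}\,\cdot\, c(n,k)^{1/(\alpha|k|)},
\]
where the first factor depends only on $|k|$ and tends to $e/\alpha$ by Stirling. For the second factor the upper estimate $c(n,k)\leq\sum_{|k'|=|k|}c(n,k')$ and Lemma \ref{l4} give $\limsup c(n,k)^{1/|k|}\leq n$; for the matching lower bound, the number $\binom{n+p-1}{n-1}$ of multi-indices with $|k|=p$ grows only polynomially in $p$, so pigeonhole against the extracting sequence of Lemma \ref{l4} produces, for infinitely many $p$, a multi-index $k$ with $|k|=p$ and $c(n,k)\geq n^{p-o(p)}$. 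Hence $\limsup c(n,k)^{1/|k|}=n$, so $c(n,k)^{1/(\alpha|k|)}$ has limsup $n^{1/\alpha}$, giving $\Pi = e n^{1/\alpha}/\alpha$, and Theorem \ref{t2} then yields $\sigma = \Pi/(e\rho) = n^{1/\alpha}$, as claimed.
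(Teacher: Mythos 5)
Your overall architecture (verify entireness, then read the order from Theorem \ref{t1} and the type from Theorem \ref{t2}) is exactly the paper's, and the order computation is essentially identical. For the type you take a genuinely different route. The paper uses Lemma \ref{p1} to bound $k!\geq\Gamma(|k|/n+1)^n$, then observes that equality is attained at the balanced multi-index $k=(|k|/n,\ldots,|k|/n)$, so that $\sup_{|k|=p}c(n,k)$ can be evaluated asymptotically by Stirling. You instead extract $\limsup_{|k|\to\infty}c(n,k)^{1/|k|}=n$ directly from Lemma \ref{l4} by a pigeonhole argument: the number of multi-indices with $|k|=p$ is only polynomial in $p$, so along the extremal subsequence of Lemma \ref{l4} at least one term of $\sum_{|k|=p}c(n,k)\approx n^{p}$ must itself be of size $n^{p-o(p)}$. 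This is a clean and correct alternative that bypasses Lemma \ref{p1} entirely; the paper's version gives slightly more (it names the extremizing multi-index), but yours is sufficient and arguably simpler.

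The entireness step, however, does not close as written. You correctly note $|V_k(x)|\leq k!\,|x|^{|k|}$ (since Theorem \ref{fpt}(IV) gives $|P_k(x)|\leq|x|^{|k|}$ and $V_k=k!\,P_k$), but the decay $|a_k|^{1/|k|}\to 0$ does not, on its own, force convergence of $\sum_k V_k(x)a_k$ with this bound. The controlling quantity is $(|a_k|\,k!)^{1/|k|}=|a_k|^{1/|k|}\,(k!)^{1/|k|}$, and $(k!)^{1/|k|}$ is unbounded: for $k=(p,0,\dots,0)$ it is $(p!)^{1/p}\sim p/e$. Since here $|a_k|^{1/|k|}$ behaves like $c(n,k)^{1/|k|}$ divided by $\Gamma(\alpha|k|+\beta)^{1/|k|}\sim(\alpha|k|/e)^{\alpha}$, it decays only at a polynomial rate $\sim|k|^{-\alpha}$, and the product $(|a_k|\,k!)^{1/|k|}$ therefore grows like $|k|^{1-\alpha}$ whenever $\alpha<1$. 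So the estimate you invoke does not give locally uniform convergence in that regime, and the multinomial count of multi-indices with $|k|=p$ cannot rescue it. The paper's own display passes directly to $\sum_{|k|=p}c(n,k)\,|x|^p/\Gamma(\alpha p+\beta)$, which tacitly uses the bound $|x|^{|k|}$ for the polynomial in the series, i.e.\ \emph{without} the factor $k!$ — the estimate stated for $P_k$, not for $V_k=k!P_k$. Whatever one decides about that normalization, you should not assert that $|a_k|^{1/|k|}\to 0$ together with $|V_k(x)|\leq k!\,|x|^{|k|}$ yields convergence; it does not, and for $\alpha<1$ you need a bound without the uncompensated $k!$.
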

\begin{proof}
The ray of convergence of $E_{\alpha, \beta}(x)$ is $+\infty$. For it is sufficient to observe that
$$ \sum_{|k|=0}^\infty\frac{c(n, k) |V_k(x)|}{\Gamma(\alpha |k|+\beta)}\overset{\textrm{Theorem \ref{fpt} IV}}{\leq} \sum_{p=0}^\infty \left(\sum_{|k|=p}^\infty c(n, k)\right) \frac{|x|^{p}}{\Gamma(\alpha p+\beta)}$$
and the conclusion follows by the Cauchy--Hadamard's Theorem for the power series once we note that
$$
\limsup_{p\to +\infty} \left( \left(\sum_{|k|=p}^\infty c(n, k)\right) \frac{1}{\Gamma(\alpha p+\beta)} \right)^{\frac 1p}\overset{\textrm{Lemma \ref{l4}}}{=} 0.
$$
 We prove the remaining part of the lemma for $\beta=1$ since we can deduce the general case by observing that
 $$
 \Gamma(\alpha x+\beta)=\Gamma\left(\alpha\left(x+\frac{\beta-1}{\alpha}\right)+1\right).
   $$
   To prove that the order of $E_{\alpha,1}$ is equal to $\frac 1{\alpha}$ we apply Theorem \ref{t1} with $a_k = \frac{c(n, k)}{\Gamma\left(\alpha |k|+1\right)}$ and the Stirling-De Moivre formula to obtain
\[
 \begin{split}
\rho\left( E_{\alpha,1} \right)&\underset{\textrm{Theorem \ref{t1}}}{=}\limsup_{|k|\to +\infty}\frac {|k| \log |k|}{-\log|\frac {1}{c(n,k)}a_k|}
  \\
 &
 =
\limsup_{|k|\to +\infty}\frac {|k| \log |k|}{\log \Gamma\left(\alpha |k| +1\right)}\underset{\textrm{Stirling-De Moivre}}{=}\frac 1\alpha.
 \end{split}
 \]
To prove that the type of $E_{\alpha, 1}$ is equal to $n^{\frac 1\alpha}$ we apply Theorem \ref{t2} with $a_k = \frac{c(n, k)}{\Gamma\left(\alpha |k|+1\right)}$ to obtain
\[
\begin{split}
\sigma(E_{\alpha,1})&=\frac \alpha e\limsup_{|k|\to +\infty} |k|\left( |a_k| \right)^{\frac {1}{\alpha|k|}}
\\
&
=\frac \alpha e\limsup_{|k|\to +\infty} |k|\left( \frac{c(n,k)}{\Gamma\left( \alpha|k|+1 \right)} \right)^{\frac 1{\alpha |k|}}.
\end{split}
\]
Since by the Lemma \ref{p1}, we have
\[
 \begin{split}
\sup_{|k|=p} \frac{c(n,k)}{\Gamma\left( \alpha|k|+1 \right)}&=\sup_{|k|=p}\frac{(n+|k|-1)!}{(n-1)!\, k!\, \Gamma\left( \alpha|k|+1 \right)}
\\
&
\leq \sup_{|k|=p}\frac{(n+|k|-1)!}{(n-1)!\, \Gamma\left(\frac {|k|}{n}+1\right)^n\, \Gamma\left( \alpha|k|+1 \right)}
\end{split}
\]
with the equality when $|k|$ is a multiple of $n$ and $k=\left(\frac {|k|}{n},\dots, \frac {|k|}{n}\right)$, we can conclude
\[
\begin{split}
& \sigma(E_{\alpha,1})=\frac \alpha e \limsup_{|k|\to +\infty} |k|\left( \frac{(n+|k|-1)!}{(n-1)!\, \Gamma\left(\frac {|k|}{n}+1\right)^n\, \Gamma\left( \alpha|k|+1 \right)}\right)^{\frac 1{\alpha |k|}}\\
& \underset{\textrm{Stirling De-Moivre}}{=} \frac \alpha e \limsup_{|k|\to +\infty} |k|\left( \frac{(n+|k|-1)^{n+|k|-1}}{\left(\frac {|k|}{n}\right)^{|k|}\, \left( \alpha|k|\right)^{\alpha |k|} \exp(-\alpha|k|)}\right)^{\frac 1{\alpha |k|}}= n^{\frac 1 \alpha},
\end{split}
\]
where in the second equality we deleted the terms that do not affect the $\limsup$.
\end{proof}

\begin{definition} Let $p\geq 1$.
We denote by $\mathcal{\!M}^{p}$ the space of entire monogenic
functions with either order lower than $p$ or order equal to $p$ and finite type.
It consists of functions $f$, for which there exist constants $B, C >0$ such that
\begin{equation}\label{ABC}
|f(x)|\leq C e^{B|x|^p}.
\end{equation}
Let $(f_m)_{m\in \mathbb{N} }$, $f_0\in \mathcal{\!M}^p$.
Then $f_m \to f_0$  in $\mathcal{\!M}^p$ if there exists some $B > 0$ such that
\begin{equation}
\lim\limits_{m\rightarrow\infty} \sup_{x\in \mathbb{R}^{n+1}}\Big|(f_m(x)-f_0(x))e^{-B|x|^p}\Big|=0.
\end{equation}
Functions in $\mathcal{\!M}^p$ that are left monogenic will be denoted by $\mathcal{\!M}^p_L$, while
right monogenic will be denoted by $\mathcal{\!M}^p_R$.
\end{definition}
We extend the Lemma $2.2$ in \cite{ACSS18} to the case of the monogenic entire function.
\begin{lemma}\label{l1}
Let $p\geq1$.
A function
$$
f(x)=\sum_{|k|=0}^\infty V_k(x)a_k
$$
 belongs to $\mathcal{\!M}^p$ if and only if there exist constants $C_f,\, b_f,\, \beta>0$ such that
\begin{equation}\label{estfj_1}
|a_k|\leq C_f \frac{b_f^{|k|}c(n,k)}{\Gamma\left (\frac{|k|}{p}+\beta\right)}.
\end{equation}
Furthermore, a sequence $f_m$ in $\mathcal{\!M}^p$ tends to zero if and only if $C_{f_m}\to0$ and $b_{f_m} < b$ for some $b>0$ where $c(n,k)$ is the number defined in \eqref{cnk}.
\end{lemma}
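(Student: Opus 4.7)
\medskip
\noindent \textbf{Proof proposal for Lemma \ref{l1}.}
The plan is to mirror the strategy used for the slice monogenic version (Lemma \ref{CoefLemma}), but with the monogenic Cauchy estimate on $V_k$-coefficients replacing the one on power series, and with the monogenic Mittag--Leffler function of Lemma \ref{l2} playing the role of its slice monogenic analogue. Accordingly I would split the argument into two implications followed by the short continuity statement at the end.

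For the ``only if'' direction, suppose $f\in\mathcal{M}^p$, so that $|f(x)|\leq C e^{B|x|^p}$ and hence $M_f(r)\leq C e^{Br^p}$. The Cauchy-type estimate
$$
|a_k|\leq M_f(r)\frac{c(n,k)}{r^{|k|}},
$$
which follows from Theorem \ref{t3} together with \eqref{e1}, holds for every $r>0$. Substituting the exponential bound and minimizing the right-hand side over $r>0$ gives the optimal radius $r^p=|k|/(Bp)$ and
$$
|a_k|\leq C\,c(n,k)\,\frac{(Bpe)^{|k|/p}}{|k|^{|k|/p}}.
$$
Then Stirling's formula, in the form $|k|^{|k|/p}\sim e^{-|k|/p}\Gamma(|k|/p+1)\,(\text{poly})$, together with the elementary bound $\Gamma(|k|/p+1)\geq c\,\Gamma(|k|/p+\beta)/(|k|^{\beta-1})$, converts this into the claimed estimate \eqref{estfj_1} after absorbing polynomial corrections into $b_f$ and $C_f$ and choosing $\beta$ suitably (e.g.\ $\beta=1$ works directly after adjusting $b_f$).

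For the ``if'' direction, assume \eqref{estfj_1}. Using $|V_k(x)|\leq k!\,|x|^{|k|}/|k|!\cdot|k|!/k!=|x|^{|k|}$ (Theorem \ref{fpt}(IV) applied to $P_k=V_k/k!$, combined with $V_k=k!\,P_k$) gives in any event $|V_k(x)|\leq C'\,|x|^{|k|}\,k!/|k|!$ or a simpler bound sufficient to pass to the sum
$$
|f(x)|\leq C_f\sum_{|k|=0}^{\infty}\frac{c(n,k)\,b_f^{|k|}}{\Gamma\!\left(\tfrac{|k|}{p}+\beta\right)}|V_k(x)|
= C_f\sum_{|k|=0}^{\infty}\frac{c(n,k)\,V_k(b_f x)}{\Gamma\!\left(\tfrac{|k|}{p}+\beta\right)}\bigg|_{\text{in norm}},
$$
which is exactly $C_f\,E_{1/p,\beta}(b_f x)$ in absolute value up to the bound on $|V_k|$. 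By Lemma \ref{l2} the Mittag--Leffler function $E_{1/p,\beta}$ is entire monogenic of order $p$ and type $n^p$, so it satisfies an estimate $|E_{1/p,\beta}(y)|\leq C'' e^{B''|y|^p}$, and feeding $y=b_f x$ produces $|f(x)|\leq C_f C'' e^{B'' b_f^p|x|^p}$, proving $f\in\mathcal{M}^p$.

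The continuity part then follows immediately from the explicit shape of the bound: if a sequence $(f_m)$ has coefficients satisfying \eqref{estfj_1} with $C_{f_m}\to 0$ and $b_{f_m}<b$, then the derivation above gives
$$
|f_m(x)|\leq C_{f_m}C''\exp(B''b^p|x|^p),
$$
so multiplying by $e^{-B|x|^p}$ with $B>B''b^p$ and taking the supremum yields convergence to zero. Conversely, given $f_m\to 0$ in $\mathcal{M}^p$, the Cauchy estimate applied to $f_m$ with the uniform exponential bound produces the required uniform control on $b_{f_m}$ and the vanishing of $C_{f_m}$. I expect the main technical obstacle to be the careful matching of the Stirling asymptotics with the parameter $\beta$, in particular verifying that one may indeed absorb the polynomial factors arising from $(|k|!)^{1/p}\leftrightarrow \Gamma(|k|/p+\beta)$ by shifting $b_f$, and tracking the dependence on $n$ through $c(n,k)$ so that the Mittag--Leffler comparison in Lemma \ref{l2} is applicable without loss.
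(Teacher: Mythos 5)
Your proposal is correct and follows essentially the same route as the paper: the Cauchy coefficient estimate $|a_k|\leq M_f(r)\,c(n,k)/r^{|k|}$ optimized over the radius gives \eqref{estfj_1} in the forward direction, and the Mittag--Leffler function of Lemma \ref{l2} is invoked for the converse. The paper takes a slightly longer road in the forward direction, first estimating $|\partial_x^k f(x)|$ at a generic point via the Cauchy formula and then applying the maximum-modulus principle at the origin, and uses the elementary bound $|k|!\le |k|^{|k|}$ together with Lemma \ref{estigam}(III) in place of your Stirling step, but this is a cosmetic difference (your more direct version gives a marginally tighter $b_f$) rather than a genuinely different approach.
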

\begin{proof}
$(\Rightarrow)$ By the Theorem \ref{t3} we have
$$ \partial_x^k f(x)=(-1)^{|k|}\int_{\partial B(x, s|x|)}\mathcal G_k(y-x)Dy f(y) $$
where $s>0$ is a constant to be determined later. In view of the estimate \eqref{e1} and since $f\in\mathcal M^p$, we have
\[
\begin{split}
|\partial_x^k f(x)| & \leq k! \, \frac{c(n,k)}{(s|x|)^{|k|}}\sup_{|\zeta-x|=s|x|}|f(\zeta)|\\
&\leq k! \, \frac{c(n,k)}{(s|x|)^{|k|}} M_f((1+s)|x|)\\
&\leq C_f k!\, \frac{c(n,k)}{(s|x|)^{|k|}} \exp(B(1+s)^p|x|^p)\\
&\leq C_f k!\, \frac{c(n,k)}{(s|x|)^{|k|}} \exp(B2^ps^p|x|^p)\exp(B2^p|x|^p)
\end{split}
\]
where the last inequality is due to the estimate: $(1+s)^p\leq 2^p(1+s^p)$. We define
$$ g(s):= \frac{\exp(B2^p s^p |x|^p)}{(s|x|^{|k|})} $$
and we note that this function gets its minimum at
$$
s_0:= \frac 12\left(\frac{|k|}{pB}\right)^{\frac 1p}\frac{1}{|x|}.
$$
Thus we have
$$
g(s_0)=\exp\left(\frac{|k|}{p}\right)\left(\frac{2pB}{|m|}\right)^{\frac{|m|}{p}},
$$
and
$$
 |\partial_x^k f(x)|\leq C_f\, k!\, c(n,k) \left[\left( 2epB \right)^{\frac 1p}\right]^{|k|}|k|^{-\frac{|k|}{p}}\exp(B2^{p}|x|^p).
$$
We set $b=\left(2epB\right)^{\frac 1p}$ and by the maximum modulus principle we have
\[
\begin{split}
|a_k|=\frac{|\partial_x^kf(0)|}{k!} & \leq \frac{\sup_{|x|=r}|\partial_x^kf(x)|}{k!}
\\
&
\leq C_f\, c(n,k)\, b^{|k|}\, |k|^{-\frac{|k|}{p}}\exp(B2^p r^p)
\\
&
\leq 2 C_f\, c(n,k)\, b^{|k|}\, |k|^{-\frac{|k|}{p}}
\\
&
\leq 2C_f'\, c(n,k)\, b^{|k|}\, (|k|!)^{-\frac{1}{p}}
\\
&\overset{\textrm{Lemma \ref{estigam} (III)}}{\leq} 2C_f' c(n,k) \frac{b^{|k|}}{\Gamma\left(\frac{|k|}{p}+1\right)}.
\end{split}
\]
$(\Leftarrow)$ The other direction is a consequence of the properties of the Mittag-Leffler function described in Lemma \ref{l2}
\end{proof}

To define a class of operators that act over $\mathcal M_L^p$ with image in the same space, it is useful to introduce the left (resp. right) C-K product between left (resp. right) monogenic entire functions (see p. 114 in \cite{BDS82}).
\begin{definition}
Let $f,g\in \mathcal M_{L}$ be entire functions (resp. $f,g\in \mathcal M_{R}$).
Using their Taylor series representation
$$ f(x)=\sum_{|k|=0}^{+\infty} V_k(x) f_k\quad \textrm{(resp. $f(x)=\sum_{|k|=0}^{+\infty} f_k V_k(x)$ )} $$
and
$$ g(x)=\sum_{|k|=0}^{+\infty} V_k(x) g_k\quad \textrm{(resp. $g(x)=\sum_{|k|=0}^{+\infty} g_k V_k(x)$ )} $$
 we define
$$f \odot_L g:= \sum_{|k|=0}^{+\infty} \sum_{|j|=0}^{+\infty} V_{k+j}(x) f_k g_j\quad\textrm{(resp. $f \odot_R g:= \sum_{|k|=0}^{+\infty} \sum_{|j|=0}^{+\infty} f_k g_j V_{k+j}(x)$)}.$$
\end{definition}

\begin{remark}
At p. 114 in \cite{BDS82}, the definition of C-K product is given between Fueter polynomial. Here we adapt that definition for the polynomial $V_k(x)$ introduced in the Definition \ref{fp}.
\end{remark}

\section{Infinite order differential operators  on monogenic functions}\label{INMON}

In this section, using the definition of the C-K product of two monogenic entire functions
we define suitable classes of infinite order differential operators in the monogenic setting.

\begin{definition}\label{o1}
Let $p\geq 1$ and set $\mathbb{N}_0=\mathbb{N} \cup\{0\}$.
\begin{itemize}
\item
Let $(u_m)_{m\in (\mathbb{N}_0)^n }:\mathbb{R}^{n+1}\to \mathbb{R}_n$ be  entire functions that belong to $\mathcal{M}_L$.
We define the set $\mathbf{DM}^L_{p,0}$ of formal
operators defined by
$$
U_L(x,\pp_{x})f(x):=\sum_{|m|=0}^\infty u_m(x)\odot_L \pp_{x}^m f(x),
$$
for entire functions $f$ in $\mathcal{M}_L$ where $\pp_{x}^m:= \pp_{x_1}^{m_1}\dots\pp_{x_n}^{m_n}$ (in particular no derivatives along the $x_0$-direction appear).
\item
Let $(u_m)_{m\in (\mathbb{N}_0)^n}:\mathbb{R}^{n+1}\to \mathbb{R}_n$ be  entire functions that belong to $\mathcal{M}_R$.
We define the set $\mathbf{DM}^R_{p,0}$ of formal
operators defined by
$$
U_R(x,\pp_{x})f(x):=\sum_{|m|=0}^\infty u_m(x)\odot_R \pp_{x}^m f(x)
$$
for entire functions $f$ in $\mathcal{M}_R$ where $\pp_{x}^m:= \pp_{x_1}^{m_1}\dots\pp_{x_n}^{m_n}$ (in particular no derivatives along the $x_0$-direction appear).
\end{itemize}
The entire functions $(u_m)_{m\in (\mathbb{N}_0)^n}$ in $\mathcal{M}_L$ (resp. in $\mathcal{M}_R$) satisfy the additional condition: There exists a constant $B>0$ such
 that for every $\varepsilon>0$ there exists a constant $C_\varepsilon>0$ for which
 \begin{equation}\label{e1bis}
 |u_m(x)|\leq C_\varepsilon \frac{\varepsilon^{|m|}}{(|m|!)^{1/q}}\exp(B|x|^p), \ \ \ {\rm for \ all}  \ \ \ m\in (\mathbb{N}_0)^n,
 \end{equation}
where $1/p+1/q=1$ and $1/q=0$ when $p=1$.
\end{definition}

\begin{remark}\label{r1}
If we consider the Taylor expansion of the $u_m$'s then we can write them as
$$u_m(x)=\sum_{|j|=0}^{+\infty}V_j(x)a_j^m$$
and, thanks to the Lemma \ref{l1}, the coefficients $a_j^m$'s satisfy the estimate
$$
 |a_j^m|\leq C_\epsilon \frac{\epsilon^{|m|} (b_{u_m})^{|j|} c(n,j)}{(m!)^{\frac{1}{q}}\Gamma\left(\frac {j}{p}+1\right)},
$$
where $c(n,j)$ is the number defined in \eqref{cnk}.
\end{remark}

We are now in the position to state and proof the main result of this section (the analogue of the Theorem $2.4$ in \cite{ACSS18}).
\begin{theorem}\label{t4}
Let $p\geq 1$ and let $\mathbf{DM}^L_{p,0}$ and $\mathbf{DM}^R_{p,0}$ be the sets of formal operators in Definition \ref{o1}.
\begin{itemize}
\item[(I)]
Let  $U_L(x,\pp_{x})\in \mathbf{DM}^L_{p,0}$ and let $f\in \mathcal{M}^p_L$, then $U_L(x,\pp_{x})f\in \mathcal{M}^p_L$ and
the operator $U_L(x,\pp_{x})$ acts continuously on $\mathcal{M}^p_L$, i.e., if $f_m\in \mathcal{M}^p_L$
and  $f_m\to 0$ in $\mathcal{M}^p_L$ then we have $U_L(x,\pp_{x})f_m\to 0$ in $\mathcal{M}^p_L$.
\item[(II)]
Let  $U_R(x,\pp_{x})\in \mathbf{DM}^R_{p,0}$ and let $f\in \mathcal{M}^p_R$, then $U_R(x,\pp_{x})f\in \mathcal{M}^p_R$ and
the operator $U_R(x,\pp_{x})$ acts continuously on $\mathcal{M}^p_R$, i.e., if $f_m\in \mathcal{M}^p_R$
and  $f_m\to 0$ in $\mathcal{M}^p_R$ then we have $U_R(x,\pp_{x})f_m\to 0$ in $\mathcal{M}^p_R$.
\end{itemize}
\end{theorem}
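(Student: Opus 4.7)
The plan is to mirror the proof of Theorem \ref{Mainslice}, with modifications dictated by the Fueter-polynomial basis $\{V_k\}$ and the CK-product; I consider case (I) in detail, since case (II) follows by completely analogous arguments. Fix $f\in \mathcal{M}_L^p$ and invoke Theorem \ref{t3} to write $f(x)=\sum_{|k|=0}^{\infty} V_k(x)\alpha_k$, where, by Lemma \ref{l1}, $|\alpha_k|\leq C_f b_f^{|k|}c(n,k)/\Gamma(|k|/p+\beta)$. Similarly expand each coefficient $u_m(x)=\sum_{|i|=0}^{\infty} V_i(x) a_i^m$ with the estimates on $a_i^m$ supplied by Remark \ref{r1}, so that the $u_m$'s contribute an $\varepsilon^{|m|}/(|m|!)^{1/q}$ decay in $m$ together with a tame exponential $\exp(B|x|^p)$.

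The first technical step is to translate $\partial_x^m$ into its action on the basis $\{V_k\}$. Iterating part (II) of Theorem \ref{fpt} one obtains $\partial_x^m P_k(x)=(k!/(k-m)!)\,P_{k-m}(x)$ whenever $k\geq m$ componentwise; combined with $V_k=k!\,P_k$ this yields
\begin{equation*}
\partial_x^m V_k(x)=\left(\frac{k!}{(k-m)!}\right)^{2} V_{k-m}(x),
\end{equation*}
and after reindexing $k=j+m$,
\begin{equation*}
\partial_x^m f(x)=\sum_{|j|=0}^{\infty} V_j(x)\left(\frac{(j+m)!}{j!}\right)^{2}\alpha_{j+m}.
\end{equation*}
Substituting into the definition of $\odot_L$ expresses $U_L(x,\partial_x)f$ as a triple series in $m,i,j$ whose general term is $V_{i+j}(x)\,a_i^m\,((j+m)!/j!)^{2}\,\alpha_{j+m}$.

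Next, I take absolute values, apply the estimate $|V_{i+j}(x)|\leq (i+j)!\,|x|^{|i+j|}$ from Theorem \ref{fpt} (IV), substitute the bounds on $a_i^m$ and $\alpha_{j+m}$, and invoke the multi-index version of Lemma \ref{estigam} (in particular $(i+j)!\leq 2^{|i+j|}i!\,j!$) together with Lemma \ref{p1} to split $\Gamma(|j+m|/p+\beta)$ into factors depending separately on $m$ and on $j$. This separation factors the bound into (a) a convergent series in $m$ analogous to $\beta(p,q,b,\varepsilon)$ of \eqref{betaserie}, whose convergence is secured by taking $\varepsilon$ sufficiently small relative to $b_f$, and (b) a residual series in $k=i+j$ which, by Lemma \ref{l2}, is dominated by $C'\exp(B'|x|^p)$. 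Combining these gives the required growth bound $|U_L(x,\partial_x)f(x)|\leq C''\exp(B''|x|^p)$, placing $U_L(x,\partial_x)f$ in $\mathcal{M}_L^p$. Continuity follows from the same chain of inequalities: if $f_\nu\to 0$ in $\mathcal{M}_L^p$ then $C_{f_\nu}\to 0$ with $b_{f_\nu}$ uniformly bounded, and replacing $C_f$ by $C_{f_\nu}$ in the estimate yields $U_L(x,\partial_x)f_\nu\to 0$ in $\mathcal{M}_L^p$.

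The principal obstacle is the squared combinatorial factor $(k!/(k-m)!)^{2}$ produced by differentiating $V_k$, which is more singular than the linear $j!/(j-m)!$ present in the slice case. Absorbing it relies on the identity $k!\,c(n,k)=(n+|k|-1)!/(n-1)!$ to turn the $c(n,j+m)(j+m)!$-mass inside $|\alpha_{j+m}|$ into a polynomial-in-$|j+m|$ growth, together with Lemma \ref{p1} to redistribute this growth between the $\Gamma$-denominators; the $c(n,i)\,i!$ coming from $|a_i^m|$ is treated in parallel. Once this bookkeeping is in place, the remainder of the argument follows the template of Theorem \ref{Mainslice}.
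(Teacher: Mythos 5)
Your starting point --- the formula $\partial_x^m V_k(x)=\bigl(k!/(k-m)!\bigr)^2\,V_{k-m}(x)$ --- is a correct consequence of the paper's stated conventions, since Definition \ref{fp} sets $V_k=k!\,P_k$ and Theorem \ref{fpt}(II) gives $\partial_{x_j}P_k=k_jP_{k-\varepsilon_j}$. But the paper's own proof uses the \emph{linear} factor $(m+k)!/k!$, and this is not an oversight you can route around: the rest of the machinery you are citing is calibrated to the $P_k$ basis, not the $V_k$ basis. In particular, the proof of Lemma \ref{l1} in the paper sets $a_k=\partial_x^k f(0)/k!$; if $f$ were literally $\sum_k V_k(x)a_k$ with $V_k=k!\,P_k$, one would have $\partial_x^k f(0)=(k!)^2 a_k$, not $k!\,a_k$. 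So the bound $|a_k|\leq C_f\,b_f^{|k|}c(n,k)/\Gamma(|k|/p+\beta)$ is really a bound on the $P_k$-coefficients, and the paper's linear derivative factor is consistent with that reading. You are combining the $V_k$ derivative formula with the $P_k$ coefficient bound, and those two do not belong together.

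Once those two are mixed, the estimate genuinely diverges and cannot be repaired by the bookkeeping you sketch. Isolate $i=j=0$: the general term then contains, up to geometric factors in $|m|$,
\[
\frac{(m!)^2}{(|m|!)^{1/q}\,\Gamma\bigl(|m|/p+1\bigr)}\sim(|m|!)^{\,2-1/q-1/p}=|m|!\,,
\]
which no $\varepsilon^{|m|}$ can control. Your proposed remedy, invoking $k!\,c(n,k)=(n+|k|-1)!/(n-1)!$ to ``turn the $c(n,j+m)(j+m)!$-mass into polynomial growth,'' does not help: $c(n,k)$ grows only geometrically in $|k|$ (Lemma \ref{l4} gives $\sum_{|k|=p}c(n,k)\sim n^p$), so it cannot absorb a stray factorial. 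The coherent fixes are either to read the paper's $V_k$ throughout as $P_k$ (then the linear factor is correct and the paper's proof works verbatim), or to work consistently in the $V_k$ basis, in which case the coefficient bound acquires an extra $1/k!$, namely $|\alpha_k|\leq C_f\,b_f^{|k|}c(n,k)/\bigl(k!\,\Gamma(|k|/p+\beta)\bigr)$; then the squared derivative factor, the $k!$ in $|V_k(x)|\leq k!\,|x|^{|k|}$, and the $1/(j+m)!$ recombine to give back exactly $(j+m)!/j!$, and the argument proceeds as in the paper.
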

\begin{proof}
We write in details only the proof of the statement (I) since the proof of the statement (II) follows from minor changes. Since $u_m,\, f\in\mathcal M^p_L$ are entire functions we can rewrite them using their Taylor expansion series
$$ f(x)=\sum_{|k|=0}^\infty V_k(x)f_k\quad \textrm{and} \quad u_m(x)=\sum_{|k|=0}^\infty V_k(x)a^m_k $$
where $a_k^m,\, f_k\in \mathbb{R}_n$ for any multi-indexes $m,\, k\in(\mathbb N_0)^n$. According to the Definition \ref{o1}, we have
\[
\begin{split}
 U_L(x,\pp_{x}) f(x)  &= \sum_{|m|=0}^\infty  u_m(x)\odot_L \pp_{x}^m f(x)
 \\
 &
 =  \sum_{|m|=0}^\infty u_m(x)\odot_L  \sum_{|k|=0}^\infty \pp_x^m (V_k(x)) f_k
 \\
&
=  \sum_{|m|=0}^\infty u_m(x)\odot_L  \sum_{|k|=0}^\infty \frac{(m+k)!}{k!} V_k(x) f_{m+k}
 \\
 &
 = \sum_{|m|=0}^\infty  \sum_{|k|=0}^\infty\sum_{|j|=0}^{\infty} \frac{(m+k)!}{k!} V_{k+j}(x)  a_j^m f_{m+k}.
\end{split}
\]
Since the $V_k(x)$'s are paravectors, using the classical inequality: $|xy|\leq 2^{\frac n2}|x| |y|$ for any $x,\, y\in \mathbb{R}_n$, we have that
 $$
  \left|U_L(x,\pp_{x}) f(x) \right|\leq 2^{\frac n2} \sum_{|m|=0}^\infty \sum_{|k|=0}^\infty \sum_{|j|=0}^\infty \frac{(m+k)!}{k!} |a_j^m| |f_{m+k}| |V_{k+j}(x)|.
   $$
We observe that if we define
$$ \underline y= y_1 e_1+\dots+ y_ne_n := \left(\sqrt{x_1^2+x_0^2}\right)e_1+\dots+\left(\sqrt{x_n^2+x_0^2}\right)e_n $$
then we have:
$$
 |V_k(x)|\leq V_k\left(\underline y\right).
$$
Since
$$V_k\left(\underline y\right)=k!\, \Pi_{i=1}^n \left(y_i\right)^{k_i}$$
we get
\begin{equation}\label{i1}
 |V_{k+j}(x)|\leq V_{k+j}(\underline y)= \frac{(k+j)!}{k!\, j!} V_k\left(\underline y\right)V_j\left(\underline y\right) \overset{\textrm{Lemma \ref{estigam} (I)}}{\leq} 2^{|k|+|j|}V_k\left(\underline y\right) V_j\left(\underline y\right).
 \end{equation}
Moreover, since $|\underline y|\leq \sqrt n |x|$ and using the Remark \ref{r1} in combination with the Lemma \ref{l1}, we have
\[
\begin{split}
\left|U_L(x,\pp_{x}) f(x) \right| & \leq 2^{\frac n2} \sum_{|m|=0}^\infty \sum_{|k|=0}^\infty \left (\sum_{|j|=0}^\infty 2^{|j|} |a_j^m|V_j(\underline y)\right) \frac{2^{|k|} (m+k)!}{k!} |f_{m+k}| V_{k}(\underline y)\\
& \leq 2^{\frac n2} C_\varepsilon \sum_{|m|=0}^\infty \sum_{|k|=0}^\infty \frac{\varepsilon^{|m|}}{(|m|!)^{1/q}}\exp(B|x|^p) \frac{2^{|k|} (m+k)!}{k!} |f_{m+k}| V_{k}(\underline y).
\end{split}
\]
Using Lemma \ref{l1} and the estimates \eqref{e1bis}, we obtain
\begin{equation}\label{fe4}
\begin{split}
& \left|U_L(x,\pp_{x}) f(x) \right| \leq 2^{\frac n2} C_f C_\varepsilon \sum_{|m|=0}^\infty \sum_{|k|=0}^\infty \frac{(m+k)!\, \varepsilon^{|m|} b_f^{|m|}}{k!\,(|m|!)^{1/q}}\frac{\,(2b_f)^{|k|}c(n,m+k)}{\Gamma\left (\frac{|m+k|}{p}+1\right)} \exp(B|x|^p) V_k(\underline y)  \\
& \overset{\textrm{Lemma \ref{estigam} (I)+(III)}}{\leq} 2^{\frac n2} C_f C_\varepsilon \sum_{|m|=0}^\infty \sum_{|k|=0}^\infty \frac{ m! \, (2\varepsilon b_f)^{|m|}}{\Gamma\left (\frac{|m|}{q}+1\right)}\frac{\, (4b_f)^{|k|}c(n,m+k)}{\Gamma\left (\frac{|m|}{p}+\frac 12\right)\Gamma\left (\frac{|k|}{p}+\frac 12\right)} \exp(B|x|^p)V_k(\underline y)\\
& \leq 2^{\frac n2} C_f C_\varepsilon \sum_{|m|=0}^\infty  \frac{(\varepsilon 2b_f)^{|m|}\,c(n,m)\,  m!\, \exp(B|x|^p)}{\Gamma\left (\frac{|m|}{q}+1\right) \Gamma\left (\frac{|m|}{p}+\frac 12\right)} \sum_{|k|=0}^\infty \frac{c(n, m+k)}{c(n,m)c(n,k)}\frac{(4b_f)^{|k|}c(n,k)\, V_k(\underline y)}{\Gamma\left (\frac{|k|}{p}+\frac 12\right)}\\
& \leq 2^{\frac{3n}{2}-1} (n-1)!\, C_f C_\varepsilon \sum_{|m|=0}^\infty  \frac{(\varepsilon 4b_f)^{|m|}\, c(n,m) \, m!\, \exp(B|x|^p)}{\Gamma\left (\frac{|m|}{q}+1\right) \Gamma\left (\frac{|m|}{p}+\frac 12\right)} \sum_{|k|=0}^\infty \frac{(8b_f)^{|k|}c(n,k) \, V_k(\underline y)}{\Gamma\left (\frac{|k|}{p}+\frac 12\right)},
\end{split}
\end{equation}
where the last inequality is due to the following estimate:
$$
\frac{c(n, m+k)}{c(n,m)c(n,k)}=\frac{(n+|m|+|k|-1)!\, ((n-1)!)^2\, k!\, m!}{(n-1)! (m+k)! (n+|m|-1)!\, (n+|k|-1)!}\leq (n-1)!\, 2^{n+|m|+|k|-1}.
$$
We observe that:
\[
\begin{split}
&\frac{m!}{\Gamma\left (\frac{|m|}{q}+1\right) \Gamma\left (\frac{|m|}{p}+\frac 12\right)}\leq \frac{|m|!}{\Gamma\left (\frac{|m|}{q}+1\right) \Gamma\left (\frac{|m|}{p}+\frac 12\right)}\\
&\overset{\textrm{Stirling-DeMoivre}}{\sim} \frac{|m|^{|m|} \sqrt{|m|} \exp(-|m|)}{\left (\frac{|m|}{q}\right)^{\frac{|m|}{q}} \left (\frac{|m|}{p}-\frac 12\right)^{\frac{|m|}{p}-\frac 12} |m|\exp(-|m|)}\lesssim \left (p^{\frac 1p}q^{\frac 1q}\right)^{|m|}.
\end{split}
\]
In particular the previous inequality implies that:
\begin{equation}\label{fe3}
\sum_{|m|=0}^\infty\frac{(\varepsilon 4b_f)^{|m|}\, c(n,m) \, m!}{\Gamma\left (\frac{|m|}{q}+1\right) \Gamma\left (\frac{|m|}{p}+\frac 12\right)}\lesssim \sum_{|m|=0}^\infty \left (p^{\frac 1p}q^{\frac 1q}\varepsilon 4b_f \right)^{|m|}\, c(n,m).
\end{equation}
By the Lemma \ref{l4} and since $\epsilon>0$ can be chosen small enough, using the Cauchy-Hadamard's Theorem for the power series, we have that the previous series converges. Thus there exists a constant $C'>0$ such that
\begin{equation}\label{fe5}
\sum_{|m|=0}^\infty\frac{(\varepsilon 4b_f)^{|m|}\, c(n,m) \, m!}{\Gamma\left (\frac{|m|}{q}+1\right) \Gamma\left (\frac{|m|}{p}+\frac 12\right)}\leq C'.
\end{equation}
By the Lemma \ref{l1} there exist two constants: $B'>0$ and $C''>0$ such that:
\begin{equation}\label{fe2}
\sum_{|k|=0}^\infty \frac{(8b_f)^{|k|}c(n,k)}{\Gamma\left (\frac{|k|}{p}+\frac 12\right)}V_k(\underline y) \leq C''\exp\left(B'|x|^p\right).
\end{equation}
In conclusion by the estimates \eqref{fe5} and \eqref{fe2} we have proved that
$$
\left|U_L(x,\pp_{x}) f(x) \right| \leq 2^{2n-1} (n-1)!\, C'' C_f C_\varepsilon C' \exp\left((B+B')|x|^p\right)
$$
which means that $U_L(x,\pp_{x}) f(x)\in \mathcal M^p_L$ and also that $U_L(x,\pp_{x})$ is continuous over $\mathcal M^p_L$ i.e. $U_L(x,\pp_{x}) f(x)\to 0$ as $f\to 0$ or, equivalently, $C_f\to 0$.
\end{proof}
\begin{remark}
If in the Definition \ref{o1} we use the standard product of the Clifford Algebra instead of the C-K product to define  $U_L(x,\pp_{x})$ (or $U_R(x,\pp_{x})$), the resulting operator does not preserve the monogenicity although it remains a continuous operator from $\mathcal M_L^p$ (or $\mathcal M_R^p$) to $C^0(\mathbb R^{n+1}, \mathbb{R}_n)$. This can be seen starting from the inequality
\[
\begin{split}
\left| \sum_{|m|=0}^\infty u_m(x)\partial_{x}^m f(x) \right| & \leq 2^{\frac n2} \sum_{|m|=0}^\infty \sum_{|k|=0}^\infty \frac{(m+k)!}{k!} |u_m(x)| |f_{m+k}| V_k(\underline y)\\
& \leq 2^{\frac n2} \sum_{|m|=0}^\infty \sum_{|k|=0}^\infty \frac{(m+k)!}{k!} \frac{\epsilon ^q}{(|m|!)^{\frac 1q}}\exp(B|x|^p) |f_{m+k}| V_k(\underline y)
\end{split}
\]
and applying to the last term in the right the same estimates used in \eqref{fe4}, \eqref{fe5} and \eqref{fe2}.
  \end{remark}

\section{Concluding remarks}

\medskip
The two hyperholomorphic function theories have several applications
 both in Mathematics and in Physics. Precisely,
 associated with slice hyperholomorphic functions, \cite{ACSBOOK2, MR2752913,BOOKGS,GSSb}
 it is possible to define the spectral theory on the $S$-spectrum \cite{CGKBOOK,MR2752913} that has applications in
 quaternionic quantum mechanics \cite{adler,JONAQS}, in fractional diffusion processes \cite{Deniz,64FRAC,FJBOOK,CPP},
 characteristic operator functions \cite{COF}, the spectral theorem for quaternionic normal operators \cite{ack}, the perturbations theory
  \cite{CCKSpert}, Schur analysis \cite{ACSBOOK} and
 other fields are under investigation.
 The monogenic function theory is associated with harmonic analysis in higher dimension.
 Moreover,  there exists a functional calculus based on the Cauchy formula from which
  the monogenic spectrum was defined, see \cite{jefferies}, the function theory has applications to boundary value problems, see \cite{GSBOOK}.

\medskip
 The notion of superoscillatory functions first appears in a series of works
of Y. Aharonov, M. V. Berry and co-authors, see \cite{aav, abook, av, berry2,berry,b1,b4}. In this context, there are good physical reasons for such a behavior,
but the discoverers pointed out the apparently paradoxical nature of such functions,
thus opening the way for a more thorough mathematical analysis of the phenomenon.
In a series of recent papers there are some systematic
study of superoscillations from the mathematical point of view, see
\cite{ABCS19,acsst1,acsst3, acsst6, JFAA, QS1, KGField, QS2, Jussi, QS3}
and see also \cite{BerryMILAN,genHYP,kempfQS}.

\medskip
The theory of superoscillations appears in various questions, namely extension of positive definite functions,
interpolation of polynomials and also of $R$-functions and have applications to signal theory and prediction
theory of stationary stochastic processes.
Thing in some of this area are still under investigation as one can see in the paper \cite{ACSFOUR}.

\medskip
 The relation between the two classes of hyperholomorphic function can be seen
by rephrasing in modern language  the Sce's theorem who generalize a theorem of
Fueter for the quaternionic setting in a non trivial and original way to Clifford valued functions,
see the book \cite{bookSce} for an overview of this profound theorem of complex and hypercomplex analysis. Precisely,
 let $\tilde{f}(z) = f_0(u,v)+if_1(u,v)$ be a  holomorphic function
defined in a domain (open and connected) $D$ such that $f_0(u,v)+if_1(u,v)$ satisfy the conditions of the type (\ref{CCondmon}) and let
$$
\Omega _D= \{x =x_0+\underline{x}\ \ |\ \  (x_0, |\underline{x}|) \in D\}
$$
be the open set induced by $D$ in $\mathbb{R}^{n+1}$.

(Step I)
The  map $T_{1}$, defined by:
 $
f(x)=T_{1}(\tilde{f}):=\textcolor{black}{f_1(x_0,|\underline{x}|)+\frac{\underline{x}}{|\underline{x}|}f_1(x_0,|\underline{x}|)}
$
takes the holomorphic functions $\tilde{f}(z)$ and induces the Clifford-valued function $f(x)$ that is slice hyperholomorphic.

(Step II)
The map $T_{2}:=\Delta_{n+1}^{\frac{n-1}{2}}$ where $\Delta_{n+1}$ is the laplacian in $n+1$ dimensions
applied to a slice hyperholomorphic function, i.e.,
$
\breve{f}(x):=\textcolor{black}{T_{2}} \Big(f(x)\Big),
$
defines a function that
is in the kernel of the Dirac operator, i.e.,
$
D\breve{f}(x)=0 \ \ \ {\rm on} \ \ \Omega_D,
$
where $D$ is the Dirac operator.

\medskip
In the Sce's theorem
$\tilde{f}(z) = f_0(u,v)+if_1(u,v)$ is a holomorphic function where $f_0(u,v)$ and $f_1(u,v)$ are real-valued functions, but considering
  slice monogenic functions, where $f_0(u,v)$ and $f_1(u,v)$ are Clifford-valued functions, and applying
the map $T_{2}:=\Delta_{n+1}^{\frac{n-1}{2}}$,
in step (II) of the Sce's construction, we get a function that is in the kernel of the Dirac operator.
The case when $\frac{n-1}{2}$ is fractional has been studied by T. Qian; for the precise history of this theorem in the notes of the book \cite{bookSce}.

\end{document}